\documentclass[12pt,a4paper,reqno]{amsart}
\usepackage{amsfonts}
\usepackage{amssymb}
\usepackage{amsmath}
\usepackage{graphicx}
\usepackage{axodraw}  %Pour les arbres, les graphes, etc.
\usepackage{amsthm}
\usepackage{amscd}
\usepackage[all]{xy}               % pour les diagrammes
\usepackage{epsfig,exscale}  % pour les dessins
\usepackage{color}
\usepackage{txfonts} 
 \font \eightrm=cmr8
 
\newcommand{\nc}{\newcommand}

\newcommand{\lbutcher}{{\raise 3.9pt\hbox{$\circ$}}\hskip -1.9pt{\scriptstyle \searrow}}

 \setlength{\textheight}{9in}
 \setlength{\topmargin}{-30pt}
 \setlength{\textwidth}{6.3in}
 \setlength{\oddsidemargin}{-7pt}
 \setlength{\evensidemargin}{-7pt}

\hfuzz5pt \vfuzz5pt

\newtheorem{thm}{Theorem}
\newtheorem{exam}{Example}

\newtheorem{cor}[thm]{Corollary}

\newtheorem{lem}[thm]{Lemma}
\newtheorem{prop}[thm]{Proposition}

\newtheorem{rmk}[thm]{Remark}

%%%%%%%%%%%%%%%%%%%%%%%%%%%%%%%%%%%%%%%%%%%%%%%%%%%%%%%%%%%trees%%%%%%%%%%%%%
%%%%%%%%%%%%%%%%%%%%%%%%%%%%%%%%%%%%%%%%%%%%%%%%%%%%%%%%%%%%%%%%%%%%%%%%%%%%

\nc{\ignore}[1]{{}}
\nc{\mrm}[1]{{\rm #1}}
\nc{\dirlim}{\displaystyle{\lim_{\longrightarrow}}\,}
\nc{\invlim}{\displaystyle{\lim_{\longleftarrow}}\,}
\nc{\vep}{\varepsilon} \nc{\ep}{\epsilon}
\nc{\sigmat}{\widetilde\sigma}
\nc{\ostar}{\overline{*}}

\nc{\mchar}{\mrm{Char}}
\nc{\Hom}{\mrm{Hom}}
\nc{\id}{\mrm{id}}

\nc{\remark}{\noindent{\bf{Remark:}}}
\nc{\remarks}{\noindent{\bf{Remarks:}}}

 %nc{\delete}[1]{}
 \nc{\grad}[1]{^{({#1})}}
 \nc{\fil}[1]{_{#1}}

\nc{\BA}{{\Bbb A}} \nc{\CC}{{\Bbb C}} \nc{\DD}{{\Bbb D}}
\nc{\EE}{{\Bbb E}} \nc{\FF}{{\Bbb F}} \nc{\GG}{{\Bbb G}}
\nc{\HH}{{\Bbb H}} \nc{\LL}{{\Bbb L}} \nc{\NN}{{\Bbb N}}
\nc{\PP}{{\Bbb P}} \nc{\QQ}{{\Bbb Q}} \nc{\RR}{{\Bbb R}}
\nc{\TT}{{\Bbb T}} \nc{\VV}{{\Bbb V}} \nc{\ZZ}{{\Bbb Z}}
\nc{\Cal}[1]{{\mathcal {#1}}}
\nc{\mop}[1]{\mathop{\hbox {\rm #1} }}
\nc{\smop}[1]{\mathop{\hbox {\eightrm #1} }}
\nc{\mopl}[1]{\mathop{\hbox {\rm #1} }\limits}
\nc{\frakg}{{\frak g}}
\nc{\g}[1]{{\frak {#1}}}
\nc{\wt}{\widetilde}
\nc{\wh}{\widehat}
\nc{\un}{\hbox{\bf 1}}
\nc{\redtext}[1]{\textcolor{red}{#1}}
\nc{\bluetext}[1]{\textcolor{blue}{#1}}

\nc\fleche[1]{\mathop{\hbox to #1 mm{\rightarrowfill}}\limits}
\def\semi{\mathrel{\times}\kern -.85pt\joinrel\mathrel{\raise
    1.4pt\hbox{${\scriptscriptstyle |}$}}}

\def\fleche#1{\mathop{\hbox to #1 mm{\rightarrowfill}}\limits}
\def\gfleche#1{\mathop{\hbox to #1 mm{\leftarrowfill}}\limits}
\def\inj#1{\mathop{\hbox to #1 mm{$\lhook\joinrel$\rightarrowfill}}\limits}
\def\ginj#1{\mathop{\hbox to #1 mm{\leftarrowfill$\joinrel\rhook$}}\limits}
\def\surj#1{\mathop{\hbox to #1 mm{\rightarrowfill\hskip 2pt\llap{$\rightarrow$}}}\limits}
\def\gsurj#1{\mathop{\hbox to #1 mm{\rlap{$\leftarrow$}\hskip 2pt
      \leftarrowfill}}\limits}
\def \restr#1{\mathstrut_{\textstyle |}\raise-6pt\hbox{$\scriptstyle #1$}}
\def \srestr#1{\mathstrut_{\scriptstyle |}\hbox to
-1.5pt{}\raise-4pt\hbox{$\scriptscriptstyle #1$}}

%%%%%%%%%%%%%%%%%%%%%%%%%%%%%%%%%%%%%%%%%%%%%%%%%%%%%%%%%%%diagram
\def\diagrama #1{\vskip 4mm \centerline {#1} \vskip 4mm}
%%%%%%%%%%%%%%%%%%%%%%%%%%%%%%%%%% binary trees %%%%%%%%%%%%%%%%%%%%%%%%%%%%%%

\newcommand{\treel}{\hskip 0.5pc\scalebox{-0.3}{{\parbox{0.5pc}{
  \begin{picture}(60,45) (75,10)
    \SetWidth{1.5}
    \SetColor{Black}
    \Line(90,0)(90,45)
  \end{picture}}}}}

\newcommand{\treesmall}{\hskip 0.8pc\scalebox{-0.3}{{\parbox{0.5pc}{
  \begin{picture}(30,45) (75,-30)
    \SetWidth{1.5}
    \SetColor{Black}
   \Line(90,0)(70,-40)
    \Line(90,0)(110,-40)
    \Line(90,0)(90,15)
  \end{picture}}}}}

\newcommand{\treeA}{\hskip 1.5pc\scalebox{-0.3}{{\parbox{0.5pc}{
   \begin{picture}(60,75) (75,-30)
    \SetWidth{1.5}
    \SetColor{Black}
    \Line(90,0)(75,-30)
    \Line(90,0)(105,-30)
    \Line(105,30)(90,0)
    \Line(105,30)(135,-30)
    \Line(105,45)(105,30)
  \end{picture}}}}}

\newcommand{\treeB}{\hskip 1.5pc\scalebox{-0.3}{{\parbox{0.5pc}{
  \begin{picture}(60,75) (75,-30)
    \SetWidth{1.5}
    \SetColor{Black}
    \Line(90,0)(75,-30)
    \Line(105,30)(135,-30)
    \Line(105,45)(105,30)
    \Line(120,0)(105,-30)
    \Line(105,30)(90,0)
  \end{picture}}}}}

\newcommand{\treeD}{\hskip 1.5pc\scalebox{-0.2}{{\parbox{0.5pc}{
 \begin{picture}(90,105) (75,-30)
    \SetWidth{2.5}
    \SetColor{Black}
    \Line(90,0)(75,-30)
    \Line(120,60)(90,0)
    \Line(90,0)(105,-30)
    \Line(120,60)(165,-30)
    \Line(120,80)(120,60)
    \Line(150,0)(135,-30)
  \end{picture}
}}}}

\newcommand{\treeE}{\hskip 1.5pc\scalebox{-0.2}{{\parbox{0.5pc}{
 \begin{picture}(90,105) (75,-30)
    \SetWidth{2.5}
    \SetColor{Black}
    \Line(90,0)(75,-30)
    \Line(120,60)(90,0)
    \Line(120,60)(165,-30)
    \Line(120,80)(120,60)
    \Line(150,0)(135,-30)
    \Line(135,30)(105,-30)
  \end{picture}
}}}}

\newcommand{\treeF}{\hskip 1.5pc\scalebox{-0.2}{{\parbox{0.5pc}{
 \begin{picture}(90,105) (75,-30)
    \SetWidth{2.5}
    \SetColor{Black}
    \Line(90,0)(75,-30)
    \Line(120,60)(90,0)
    \Line(120,60)(165,-30)
    \Line(120,80)(120,60)
    \Line(135,30)(105,-30)
    \Line(120,0)(135,-30)
  \end{picture}
}}}}

\newcommand{\treeG}{\hskip 1.5pc\scalebox{-0.2}{{\parbox{0.5pc}{
 \begin{picture}(90,105) (75,-30)
    \SetWidth{2.5}
    \SetColor{Black}
    \Line(90,0)(75,-30)
    \Line(120,60)(90,0)
    \Line(120,60)(165,-30)
    \Line(120,80)(120,60)
    \Line(105,30)(135,-30)
    \Line(120,0)(105,-30)
  \end{picture}
}}}}

%%%%%%%%%%%%%%%%%%%%%%%%%%%%%%%%%%%%%%%%%%%%%%%%%%%%%%%%%%%%%%%%%%%%%%

%%%%%%% Arbres %%%%%%%%%%%%%%%%%%%%%%%%%%%%%%%%%\ta1

\def\racine{{\scalebox{0.3}{
\begin{picture}(12,12)(38,-38)
\SetWidth{0.5} \SetColor{Black} \Vertex(45,-28){6.5}
\end{picture}}}}

 \def\arbrea{\,{\scalebox{0.15}{ 
  \begin{picture}(8,55) (370,-248)
    \SetWidth{2}
    \SetColor{Black}
    \Line(374,-244)(374,-200)
    \Vertex(374,-197){9}
    \Vertex(375,-245){12}
  \end{picture}
}}\,}

 \def\arbreba{\,{\scalebox{0.15}{ 
\begin{picture}(8,106) (370,-197)
    \SetWidth{2}
    \SetColor{Black}
    \Line(374,-193)(374,-149)
    \Vertex(374,-146){9}
    \Vertex(375,-194){12}
    \Line(374,-142)(374,-98)
    \Vertex(374,-95){9}
  \end{picture}
}}\,}

 \def\arbrebb{\,{\scalebox{0.15}{ 
  \begin{picture}(48,48) (349,-255)
    \SetWidth{2}
    \SetColor{Black}
    \Vertex(375,-252){12}
    \Line(376,-250)(395,-215)
    \Line(373,-251)(354,-214)
    \Vertex(353,-211){9}
    \Vertex(395,-213){9}
  \end{picture}
}}}

\def\arbreca{\,{\scalebox{0.15}{
\begin{picture}(8,156) (370,-147)
    \SetWidth{2}
    \SetColor{Black}
    \Line(374,-143)(374,-99)
    \Vertex(374,-96){9}
    \Vertex(375,-144){12}
    \Line(374,-92)(374,-48)
    \Vertex(374,-45){9}
    \Line(374,-42)(374,2)
    \Vertex(374,5){9}
  \end{picture}
}}\,}

\def\arbrecb{\,{\scalebox{0.15}{
\begin{picture}(48,94) (349,-255)
\SetWidth{2}
\SetColor{Black}
\Line(375,-200)(395,-150)
\Line(375,-200)(354,-150)
\Vertex(354,-150){9}
\Vertex(395,-150){9}
\Vertex(375,-200){9}
\Line(375,-250)(375,-200)
\Vertex(375,-250){12}
\end{picture}}}\,}

\def\arbrecc{\,{\scalebox{0.15}{
 \begin{picture}(48,98) (349,-205)
    \SetWidth{2}
    \SetColor{Black}
    \Vertex(375,-202){12}
    \Line(376,-200)(395,-165)
    \Line(373,-201)(354,-164)
    \Vertex(353,-161){9}
    \Vertex(395,-163){9}
    \Line(353,-160)(353,-113)
    \Vertex(353,-111){9}
  \end{picture}
}}\,}

\def\arbreccc{\,{\scalebox{0.15}{
 \begin{picture}(48,98) (349,-205)
    \SetWidth{2}
    \SetColor{Black}
    \Vertex(375,-202){12}
    \Line(376,-200)(395,-165)
    \Line(373,-201)(354,-164)
    \Vertex(353,-161){9}
    \Vertex(395,-163){9}
    \Line(395,-160)(395,-113)
    \Vertex(395,-111){9}
  \end{picture}
}}\,}

%%%%%%%%%%%%%%%%%%%%%%%%%%%%%%%%%%%%%%%%%%%%%%%%%%%%%%%%%%%%%%%%%%%%%%%%%%%%%%%%%%%%%%%%%%%%%%%%%%%%%%

%%%%%%%%%%%%%%%%%%%%%%%%%%%%%%%%%%%%%%%%%%%%%%%%%%%%%%%%%%%%%%%%%%%%%%%%%%%%%%%%%%%%%%%%%%%%%%%%%%%%%%

\def\arbreda{\,{\scalebox{0.15}{
\begin{picture}(8,204) (370,-99)
    \SetWidth{2}
    \SetColor{Black}
    \Line(374,-95)(374,-51)
    \Vertex(374,-48){9}
    \Vertex(375,-96){12}
    \Line(374,-44)(374,0)
    \Vertex(374,3){9}
    \Line(374,6)(374,50)
    \Vertex(374,53){9}
    \Line(374,53)(374,98)
    \Vertex(374,101){9}
  \end{picture}
}}\,}

\def\arbredb{\,{\scalebox{0.15}{
\begin{picture}(48,135) (349,-255)
    \SetWidth{2}
    \SetColor{Black}
    \Line(376,-150)(395,-100)
    \Line(373,-150)(354,-100)
    \Vertex(353,-100){9}
    \Vertex(395,-100){9}
    \Vertex(374,-150){9}
    \Line(374,-200)(374,-150)
    \Vertex(374,-200){9}
    \Line(374,-250)(374,-200)
    \Vertex(374,-250){12}
  \end{picture}
}}\,}

\def\arbredc{\,{\scalebox{0.15}{
 \begin{picture}(48,150) (349,-205)
    \SetWidth{2}
    \SetColor{Black}
    \Line(376,-148)(395,-113)
    \Line(373,-149)(354,-112)
    \Vertex(353,-109){9}
    \Vertex(395,-111){9}
    \Line(353,-108)(353,-61)
    \Vertex(353,-59){9}
    \Line(374,-200)(374,-153)
    \Vertex(374,-149){9}
    \Vertex(374,-202){12}
  \end{picture}
}}\,}

\def\arbredcc{\,{\scalebox{0.15}{
 \begin{picture}(48,150) (349,-205)
    \SetWidth{2}
    \SetColor{Black}
    \Line(376,-148)(395,-113)
    \Line(373,-149)(354,-112)
    \Vertex(353,-109){9}
    \Vertex(395,-111){9}
    \Line(395,-108)(395,-61)
    \Vertex(395,-59){9}
    \Line(374,-200)(374,-153)
    \Vertex(374,-149){9}
    \Vertex(374,-202){12}
  \end{picture}
}}\,}

\def\arbredd{\,{\scalebox{0.15}{
 \begin{picture}(48,99) (349,-251)
    \SetWidth{2}
    \SetColor{Black}
    \Line(376,-199)(395,-164)
    \Line(373,-200)(354,-163)
    \Vertex(353,-160){9}
    \Vertex(395,-162){9}
    \Vertex(376,-156){9}
    \Vertex(376,-248){12}
    \Line(375,-245)(375,-204)
    \Line(375,-200)(375,-159)
    \Vertex(375,-201){9}
  \end{picture}
}}\,}

\def\arbrede{\,{\scalebox{0.15}{
 \begin{picture}(48,153) (349,-150)
    \SetWidth{2}
    \SetColor{Black}
    \Vertex(375,-147){12}
    \Line(376,-145)(395,-110)
    \Line(373,-146)(354,-109)
    \Vertex(353,-106){9}
    \Vertex(395,-108){9}
    \Line(353,-105)(353,-58)
    \Vertex(353,-56){9}
    \Line(353,-52)(353,-5)
    \Vertex(353,-1){9}
  \end{picture}
}}\,}

\def\arbredee{\,{\scalebox{0.15}{
 \begin{picture}(48,153) (349,-150)
    \SetWidth{2}
    \SetColor{Black}
    \Vertex(375,-147){12}
    \Line(376,-145)(395,-110)
    \Line(373,-146)(354,-109)
    \Vertex(353,-106){9}
    \Vertex(395,-108){9}
    \Line(395,-105)(395,-58)
    \Vertex(395,-56){9}
    \Line(395,-52)(395,-5)
    \Vertex(395,-1){9}
  \end{picture}
}}\,}

\def\arbredf{\,{\scalebox{0.15}{
\begin{picture}(48,98) (349,-205)
    \SetWidth{2}
    \SetColor{Black}
    \Vertex(375,-202){12}
    \Line(376,-200)(395,-165)
    \Line(373,-201)(354,-164)
    \Vertex(353,-161){9}
    \Vertex(395,-163){9}
    \Line(353,-160)(353,-113)
    \Vertex(353,-111){9}
    \Line(395,-159)(395,-112)
    \Vertex(395,-111){9}
  \end{picture}
}}\,}

\def\arbredz{\,{\scalebox{0.15}{
  \begin{picture}(68,88) (329,-215)
    \SetWidth{2}
    \SetColor{Black}
    \Vertex(375,-212){12}
    \Line(376,-210)(395,-175)
    \Line(373,-211)(354,-174)
    \Vertex(353,-171){9}
    \Vertex(395,-173){9}
    \Line(351,-168)(332,-131)
    \Line(355,-168)(374,-133)
    \Vertex(333,-131){9}
    \Vertex(374,-131){9}
  \end{picture}
}}\,}
\def\arbredzz{\,{\scalebox{0.15}{
  \begin{picture}(68,88) (329,-215)
    \SetWidth{2}
    \SetColor{Black}
    \Vertex(375,-212){12}
    \Line(376,-210)(395,-175)
    \Line(373,-211)(354,-174)
    \Vertex(353,-171){9}
    \Vertex(395,-173){9}
    \Line(393,-168)(377,-131)
    \Line(398,-168)(418,-133)
    \Vertex(418,-131){9}
    \Vertex(377,-131){9}
  \end{picture}
}}\,}

\def\arbredg{\,{\scalebox{0.15}{
\begin{picture}(48,98) (349,-205)
    \SetWidth{2}
    \SetColor{Black}
    \Vertex(375,-202){12}
    \Line(376,-200)(395,-165)
    \Line(373,-201)(354,-164)
    \Vertex(353,-161){9}
    \Vertex(395,-163){9}
    \Line(375,-201)(375,-160)
    \Vertex(376,-157){9}
    \Vertex(353,-111){9}
    \Line(353,-161)(353,-111)
  \end{picture}
}}\,}

\def\arbredh{\,{\scalebox{0.15}{
 \begin{picture}(90,46) (330,-257)
    \SetWidth{2}
    \SetColor{Black}
    \Vertex(375,-254){12}
    \Line(376,-252)(395,-217)
    \Vertex(395,-215){9}
    \Line(374,-254)(335,-226)
    \Vertex(334,-224){9}
    \Line(375,-252)(356,-215)
    \Vertex(355,-215){9}
    \Line(374,-255)(417,-227)
    \Vertex(418,-225){9}
  \end{picture}
}}\,}

%%%%%%%%%%%%%%%%%%%%%%%9 trees of degree 6%%%%%%%%%%%%%%%%%%%%%%%%%%

\def\arbreddb{\,{\scalebox{0.15}{
 \begin{picture}(48,150) (349,-205)
    \SetWidth{2}
    \SetColor{Black}
    \Line(376,-108)(395,-60)
    \Line(373,-109)(354,-60)
    \Vertex(353,-59){9}
    \Vertex(395,-59){9}
    \Line(353,-60)(353,-10)
    \Vertex(353,-9){9}
    \Line(374,-153)(374,-108)
    \Vertex(374,-109){9}
    \Vertex(374,-153){9}
		\Line(374,-200)(374,-153)
		\Vertex(374,-202){12}
  \end{picture}
}}\,}

\def\arbredddb{\,{\scalebox{0.15}{
 \begin{picture}(48,150) (349,-205)
    \SetWidth{2}
    \SetColor{Black}
    \Line(376,-108)(395,-60)
    \Line(373,-109)(354,-60)
    \Vertex(353,-59){9}
    \Vertex(395,-59){9}
    \Line(395,-60)(395,-10)
    \Vertex(395,-9){9}
    \Line(374,-153)(374,-108)
    \Vertex(374,-109){9}
    \Vertex(374,-153){9}
    \Line(374,-200)(374,-153)
    \Vertex(374,-202){12}
  \end{picture}
}}\,}

\def\arbreddd{\,{\scalebox{0.15}{
 \begin{picture}(48,153) (349,-150)
    \SetWidth{2}
    \SetColor{Black}
    \Vertex(375,-147){12}
    \Line(376,-145)(395,-110)
    \Line(373,-146)(354,-109)
    \Vertex(353,-106){9}
    \Vertex(395,-108){9}
    \Line(353,-105)(353,-58)
    \Vertex(353,-56){9}
    \Line(353,-52)(353,-5)
    \Vertex(353,-1){9}
		 \Line(353,-3)(353,50)
    \Vertex(353,49){9}
  \end{picture}
}}\,}

\def\arbredde{\,{\scalebox{0.15}{
\begin{picture}(48,98) (349,-205)
    \SetWidth{2}
    \SetColor{Black}
		\Vertex(375,-202){12}
    \Line(376,-200)(375,-160)
    \Vertex(375,-161){9}
    \Line(376,-160)(395,-115)
    \Line(373,-160)(354,-114)
    \Vertex(353,-111){9}
    \Vertex(395,-113){9}
    \Line(353,-111)(353,-63)
    \Vertex(353,-61){9}
    \Line(395,-111)(395,-62)
    \Vertex(395,-61){9}
  \end{picture}
}}\,}

\def\arbreedb{\,{\scalebox{0.15}{
 \begin{picture}(48,98) (349,-205)
    \SetWidth{2}
    \SetColor{Black}
    \Vertex(375,-197){12}
    \Line(375,-195)(374,-150)
    \Vertex(375,-147){9}
    \Line(376,-145)(395,-110)
    \Line(373,-146)(354,-109)
    \Vertex(353,-106){9}
    \Vertex(395,-108){9}
    \Line(353,-105)(353,-58)
    \Vertex(353,-56){9}
    \Line(353,-52)(353,-5)
    \Vertex(353,-1){9}
  \end{picture}
}}\,}

\def\arbreedf{\,{\scalebox{0.15}{
  \begin{picture}(68,88) (329,-215)
    \SetWidth{2}
    \SetColor{Black}
    \Vertex(375,-212){12}
    \Line(376,-210)(395,-175)
    \Line(373,-211)(354,-174)
    \Vertex(353,-171){9}
    \Vertex(395,-173){9}
    \Line(351,-168)(332,-131)
    \Line(355,-168)(374,-133)
    \Vertex(333,-131){9}
    \Vertex(374,-131){9}
    \Line(334,-131)(333,-85)
    \Vertex(333,-85){9}
  \end{picture}
}}\,}

\def\arbreedff{\,{\scalebox{0.15}{
  \begin{picture}(68,88) (329,-215)
    \SetWidth{2}
    \SetColor{Black}
    \Vertex(375,-212){12}
    \Line(376,-210)(395,-175)
    \Line(373,-211)(354,-174)
    \Vertex(353,-171){9}
    \Vertex(395,-173){9}
    \Line(395,-168)(415,-131)
    \Line(393,-168)(374,-133)
    \Vertex(415,-131){9}
    \Vertex(374,-131){9}
    \Line(374,-131)(374,-85)
    \Vertex(374,-85){9}
  \end{picture}
}}\,}

\def\arbreedg{\,{\scalebox{0.15}{
\begin{picture}(48,98) (349,-205)
    \SetWidth{2}
    \SetColor{Black}
    \Vertex(375,-202){12}
    \Line(376,-200)(395,-165)
    \Line(373,-201)(354,-164)
    \Vertex(353,-161){9}
    \Vertex(395,-163){9}
    \Line(353,-160)(353,-113)
    \Vertex(353,-111){9}
    \Line(395,-159)(395,-112)
    \Vertex(395,-111){9}
    \Line(353,-60)(353,-112)
    \Vertex(353,-60){9}
  \end{picture}
}}\,}

\def\arbreeddh{\,{\scalebox{0.15}{
  \begin{picture}(68,88) (329,-215)
    \SetWidth{2}
    \SetColor{Black}
    \Vertex(375,-212){12}
    \Line(376,-210)(400,-175)
    \Line(373,-211)(350,-174)
    \Vertex(350,-171){9}
    \Vertex(400,-173){9}
    \Line(347,-131)(330,-93)
    \Line(353,-131)(365,-93)
    \Vertex(330,-91){9}
    \Vertex(365,-91){9}
    \Line(350,-168)(350,-131)
    \Vertex(350,-131){9}
		\end{picture}
}}\,}

\def\arbreeddhh{\,{\scalebox{0.15}{
  \begin{picture}(68,88) (329,-215)
    \SetWidth{2}
    \SetColor{Black}
    \Vertex(375,-212){12}
    \Line(376,-175)(375,-120)
    \Line(375,-211)(375,-164)
    \Vertex(375,-165){9}
    \Vertex(375,-120){9}
    \Line(375,-75)(354,-15)
    \Line(375,-75)(394,-15)
    \Vertex(353,-15){9}
    \Vertex(395,-15){9}
    \Line(375,-131)(375,-75)
    \Vertex(375,-75){9}
\end{picture}
}}\,}

\def\arbreedddh{\,{\scalebox{0.15}{
  \begin{picture}(68,88) (329,-215)
    \SetWidth{2}
    \SetColor{Black}
    \Vertex(375,-212){12}
    \Line(376,-210)(400,-175)
    \Line(373,-211)(350,-174)
    \Vertex(350,-171){9}
    \Vertex(400,-173){9}
    \Line(400,-131)(418,-93)
    \Line(400,-131)(385,-93)
    \Vertex(418,-91){9}
    \Vertex(385,-91){9}
    \Line(400,-168)(400,-131)
    \Vertex(400,-131){9}
\end{picture}
}}\,}

%%%%%%%%%%%%%%%%%%%%%%%Trees with 6 degree%%%%%%%%%%%%%%%%%%%%%%%%%%
\def\arbreea{\,{\scalebox{0.15}{
 \begin{picture}(8,251) (370,-52)
    \SetWidth{2}
    \SetColor{Black}
    \Line(374,-48)(374,-4)
    \Vertex(374,-1){9}
    \Vertex(375,-49){12}
    \Line(374,3)(374,47)
    \Vertex(374,50){9}
    \Line(374,53)(374,97)
    \Vertex(374,100){9}
    \Vertex(374,148){9}
    \Line(374,149)(374,194)
    \Line(374,100)(374,144)
    \Vertex(374,195){9}
  \end{picture}
}}\,}

\def\arbreeb{\,{\scalebox{0.15}{
 \begin{picture}(48,153) (349,-150)
    \SetWidth{2}
    \SetColor{Black}
    \Vertex(375,-197){12}
    \Line(375,-195)(374,-150)
    \Vertex(375,-147){9}
    \Line(376,-145)(395,-110)
    \Line(373,-146)(354,-109)
    \Vertex(353,-106){9}
    \Vertex(395,-108){9}
    \Line(395,-105)(395,-58)
    \Vertex(395,-56){9}
    \Line(395,-52)(395,-5)
    \Vertex(395,-1){9}
  \end{picture}
}}\,}

\def\arbreec{\,{\scalebox{0.15}{
  \begin{picture}(68,88) (329,-215)
    \SetWidth{2}
    \SetColor{Black}
    \Vertex(375,-262){12}
    \Line(375,-263)(374,-205)
    \Vertex(375,-212){9}
    \Line(376,-210)(395,-175)
    \Line(373,-211)(354,-174)
    \Vertex(353,-171){9}
    \Vertex(395,-173){9}
    \Line(351,-168)(332,-131)
    \Line(355,-168)(374,-133)
    \Vertex(333,-131){9}
    \Vertex(374,-131){9}
  \end{picture}
}}\,}
\def\arbreecc{\,{\scalebox{0.15}{
  \begin{picture}(68,88) (329,-215)
    \SetWidth{2}
    \SetColor{Black}
    \Vertex(375,-262){12}
    \Line(375,-263)(374,-205)
    \Vertex(375,-212){9}
    \Line(376,-210)(395,-175)
    \Line(373,-211)(354,-174)
    \Vertex(353,-171){9}
    \Vertex(395,-173){9}
    \Line(393,-168)(377,-131)
    \Line(398,-168)(418,-133)
    \Vertex(418,-131){9}
    \Vertex(377,-131){9}
  \end{picture}
}}\,}
\def\arbreed{\,{\scalebox{0.15}{
 \begin{picture}(90,46) (330,-257)
    \SetWidth{2}
    \SetColor{Black}
    \Vertex(375,-304){12}
    \Line(375,-303)(374,-250)
    \Vertex(375,-254){9}
    \Line(376,-252)(395,-217)
    \Vertex(395,-215){9}
    \Line(374,-254)(335,-226)
    \Vertex(334,-224){9}
    \Line(375,-252)(356,-215)
    \Vertex(355,-215){9}
    \Line(374,-255)(417,-227)
    \Vertex(418,-225){9}
  \end{picture}
}}\,}
\def\arbreee{\,{\scalebox{0.15}{
 \begin{picture}(48,153) (349,-150)
    \SetWidth{2}
    \SetColor{Black}
    \Vertex(375,-147){12}
    \Line(376,-145)(395,-110)
    \Line(373,-146)(354,-109)
    \Vertex(353,-106){9}
    \Vertex(395,-108){9}
    \Line(395,-105)(395,-58)
    \Vertex(395,-56){9}
    \Line(395,-52)(395,-5)
    \Vertex(395,-1){9}
    \Line(395,-52)(395,48)
    \Vertex(395,50){9}
  \end{picture}
}}\,}
\def\arbreef{\,{\scalebox{0.15}{
  \begin{picture}(68,88) (329,-215)
    \SetWidth{2}
    \SetColor{Black}
    \Vertex(375,-212){12}
    \Line(376,-210)(395,-175)
    \Line(373,-211)(354,-174)
    \Vertex(353,-171){9}
    \Vertex(395,-173){9}
    \Line(351,-168)(332,-131)
    \Line(355,-168)(374,-133)
    \Vertex(333,-131){9}
    \Vertex(374,-131){9}
    \Line(374,-131)(374,-85)
    \Vertex(374,-85){9}
  \end{picture}
}}\,}
\def\arbreeff{\,{\scalebox{0.15}{
  \begin{picture}(68,88) (329,-215)
    \SetWidth{2}
    \SetColor{Black}
    \Vertex(375,-212){12}
    \Line(376,-210)(395,-175)
    \Line(373,-211)(354,-174)
    \Vertex(353,-171){9}
    \Vertex(395,-173){9}
    \Line(393,-168)(377,-131)
    \Line(398,-168)(418,-133)
    \Vertex(418,-131){9}
    \Vertex(377,-131){9}
    \Line(418,-131)(418,-85)
    \Vertex(418,-85){9}
  \end{picture}
}}\,}

\def\arbreeg{\,{\scalebox{0.15}{
\begin{picture}(48,98) (349,-205)
    \SetWidth{2}
    \SetColor{Black}
    \Vertex(375,-202){12}
    \Line(376,-200)(395,-165)
    \Line(373,-201)(354,-164)
    \Vertex(353,-161){9}
    \Vertex(395,-163){9}
    \Line(353,-160)(353,-113)
    \Vertex(353,-111){9}
    \Line(395,-159)(395,-112)
    \Vertex(395,-111){9}
    \Line(395,-60)(395,-112)
    \Vertex(395,-60){9}
  \end{picture}
}}\,}
\def\arbreeh{\,{\scalebox{0.15}{
  \begin{picture}(68,88) (329,-215)
    \SetWidth{2}
    \SetColor{Black}
    \Vertex(375,-212){12}
    \Line(376,-210)(400,-175)
    \Line(373,-211)(350,-174)
    \Vertex(350,-171){9}
    \Vertex(400,-173){9}
    \Line(347,-168)(330,-131)
    \Line(353,-168)(365,-133)
    \Vertex(330,-131){9}
    \Vertex(365,-131){9}
    \Line(400,-168)(400,-131)
    \Vertex(400,-131){9}

  \end{picture}
}}\,}
\def\arbreehh{\,{\scalebox{0.15}{
  \begin{picture}(68,88) (329,-215)
    \SetWidth{2}
    \SetColor{Black}
    \Vertex(375,-212){12}
    \Line(376,-210)(400,-175)
    \Line(373,-211)(350,-174)
    \Vertex(350,-171){9}
    \Vertex(400,-173){9}
    \Line(398,-168)(385,-131)
    \Line(405,-168)(422,-133)
    \Vertex(422,-131){9}
    \Vertex(385,-131){9}
    \Line(350,-168)(350,-131)
    \Vertex(350,-131){9}
  \end{picture}
}}\,}
\def\arbreei{\,{\scalebox{0.15}{
 \begin{picture}(90,46) (330,-257)
    \SetWidth{2}
    \SetColor{Black}
    \Vertex(375,-254){12}
    \Line(376,-252)(395,-217)

    \Vertex(395,-215){9}
    \Line(374,-254)(335,-226)
    \Vertex(334,-224){9}
    \Line(375,-252)(356,-215)
    \Vertex(355,-215){9}
    \Line(374,-255)(417,-227)
    \Vertex(418,-225){9}
    \Line(333,-225)(333,-180)
    \Vertex(334,-180){9}
  \end{picture}
}}\,}
\def\arbreeii{\,{\scalebox{0.15}{
 \begin{picture}(90,46) (330,-257)
    \SetWidth{2}
    \SetColor{Black}
    \Vertex(375,-254){12}
    \Line(376,-252)(395,-217)
    \Vertex(395,-215){9}
    \Line(374,-254)(335,-226)
    \Vertex(334,-224){9}
    \Line(375,-252)(356,-215)
    \Vertex(355,-215){9}
    \Line(374,-255)(417,-227)
    \Vertex(418,-225){9}
    \Line(355,-220)(355,-170)
    \Vertex(355,-170){9}
  \end{picture}
}}\,}
\def\arbreeiii{\,{\scalebox{0.15}{
 \begin{picture}(90,46) (330,-257)
    \SetWidth{2}
    \SetColor{Black}
    \Vertex(375,-254){12}
    \Line(376,-252)(395,-217)
    \Vertex(395,-215){9}
    \Line(374,-254)(335,-226)
    \Vertex(334,-224){9}
    \Line(375,-252)(356,-215)
    \Vertex(355,-215){9}
    \Line(374,-255)(417,-227)
    \Vertex(418,-225){9}
    \Line(395,-220)(395,-170)
    \Vertex(395,-170){9}
  \end{picture}
}}\,}
\def\arbreeiiii{\,{\scalebox{0.15}{

 \begin{picture}(90,46) (330,-257)
    \SetWidth{2}
    \SetColor{Black}
    \Vertex(375,-254){12}
    \Line(376,-252)(395,-217)
    \Vertex(395,-215){9}
    \Line(374,-254)(335,-226)
    \Vertex(334,-224){9}
    \Line(375,-252)(356,-215)
    \Vertex(355,-215){9}
    \Line(374,-255)(417,-227)
    \Vertex(418,-225){9}
    \Line(418,-225)(418,-180)
    \Vertex(418,-180){9}
  \end{picture}
}}\,}

%%%%%%%%%%%%%%%%%%%%%%%%%%%%%%%%%%%%%%%%%%%Trees with 7 degree %%%%%%%%%%%%%

\def\arbrefa{\,{\scalebox{0.15}{
 \begin{picture}(8,251) (370,-52)
    \SetWidth{2}
    \SetColor{Black}
    \Vertex(375,-99){12}
    \Line(375,3)(375,-99)
    \Line(375,-48)(375,-4)
    \Vertex(375,-1){9}
    \Vertex(375,-49){9}
    \Line(375,3)(375,47)
    \Vertex(375,50){9}
    \Line(375,53)(375,97)
    \Vertex(375,100){9}
    \Vertex(375,148){9}
    \Line(375,149)(375,194)
    \Line(375,100)(375,144)
    \Vertex(375,195){9}
  \end{picture}
}}\,}
\def\arbrefb{\,{\scalebox{0.15}{
 \begin{picture}(48,153) (349,-150)
    \SetWidth{2}
    \SetColor{Black}
    \Vertex(375,-247){12}
    \Line(375,-195)(375,-247)
    \Vertex(375,-197){9}
    \Line(375,-195)(374,-150)
    \Vertex(375,-147){9}
    \Line(376,-145)(395,-110)
    \Line(373,-146)(354,-109)
    \Vertex(353,-106){9}
    \Vertex(395,-108){9}
    \Line(395,-105)(395,-58)
    \Vertex(395,-56){9}
    \Line(395,-52)(395,-5)
    \Vertex(395,-1){9}
  \end{picture}
}}\,}
\def\arbrefc{\,{\scalebox{0.15}{
  \begin{picture}(68,88) (329,-215)
    \SetWidth{2}
    \SetColor{Black}
    \Vertex(375,-312){12}
    \Line(375,-263)(375,-312)
    \Vertex(375,-262){9}
    \Line(375,-263)(374,-205)
    \Vertex(375,-212){9}
    \Line(376,-210)(395,-175)
    \Line(373,-211)(354,-174)
    \Vertex(353,-171){9}
    \Vertex(395,-173){9}
    \Line(351,-168)(332,-131)
    \Line(355,-168)(374,-133)
    \Vertex(333,-131){9}
    \Vertex(374,-131){9}
  \end{picture}
}}\,}
\def\arbrefcc{\,{\scalebox{0.15}{
  \begin{picture}(68,88) (329,-215)
    \SetWidth{2}
    \SetColor{Black}
    \Vertex(375,-312){12}
    \Line(375,-263)(375,-312)
    \Vertex(375,-262){9}
    \Line(375,-263)(374,-205)
    \Vertex(375,-212){9}
    \Line(376,-210)(395,-175)
    \Line(373,-211)(354,-174)
    \Vertex(353,-171){9}
    \Vertex(395,-173){9}
    \Line(393,-168)(377,-131)
    \Line(398,-168)(418,-133)
    \Vertex(418,-131){9}
    \Vertex(377,-131){9}
  \end{picture}
}}\,}
\def\arbrefd{\,{\scalebox{0.15}{
 \begin{picture}(90,46) (330,-257)
    \SetWidth{2}
    \SetColor{Black}
    \Vertex(375,-354){12}
    \Line(375,-303)(375,-354)
    \Vertex(375,-304){9}
    \Line(375,-303)(374,-250)
    \Vertex(375,-254){9}
    \Line(376,-252)(395,-217)
    \Vertex(395,-215){9}
    \Line(374,-254)(335,-226)
    \Vertex(334,-224){9}
    \Line(375,-252)(356,-215)
    \Vertex(355,-215){9}
    \Line(374,-255)(417,-227)
    \Vertex(418,-225){9}
  \end{picture}
}}\,}
\def\arbrefe{\,{\scalebox{0.15}{
 \begin{picture}(48,153) (349,-150)
    \SetWidth{2}
    \SetColor{Black}
    \Vertex(375,-197){12}
    \Line(375,-145)(375,-197)
    \Vertex(375,-147){9}
    \Line(376,-145)(395,-110)
    \Line(373,-146)(354,-109)
    \Vertex(353,-106){9}
    \Vertex(395,-108){9}
    \Line(395,-105)(395,-58)
    \Vertex(395,-56){9}
    \Line(395,-52)(395,-5)
    \Vertex(395,-1){9}
    \Line(395,-52)(395,48)
    \Vertex(395,50){9}
  \end{picture}
}}\,}
\def\arbreff{\,{\scalebox{0.15}{
  \begin{picture}(68,88) (329,-215)
    \SetWidth{2}
    \SetColor{Black}
    \Vertex(375,-262){12}
    \Line(375,-210)(375,-262)
    \Vertex(375,-212){9}
    \Line(376,-210)(395,-175)
    \Line(373,-211)(354,-174)
    \Vertex(353,-171){9}
    \Vertex(395,-173){9}
    \Line(351,-168)(332,-131)
    \Line(355,-168)(374,-133)
    \Vertex(333,-131){9}
    \Vertex(374,-131){9}
    \Line(374,-131)(374,-85)
    \Vertex(374,-85){9}
  \end{picture}
}}\,}
\def\arbrefff{\,{\scalebox{0.15}{
  \begin{picture}(68,88) (329,-215)
    \SetWidth{2}
    \SetColor{Black}
    \Vertex(375,-262){12}
    \Line(375,-210)(375,-262)
    \Vertex(375,-212){9}
    \Line(376,-210)(395,-175)
    \Line(373,-211)(354,-174)
    \Vertex(353,-171){9}
    \Vertex(395,-173){9}
    \Line(393,-168)(377,-131)
    \Line(398,-168)(418,-133)
    \Vertex(418,-131){9}
    \Vertex(377,-131){9}
    \Line(418,-131)(418,-85)
    \Vertex(418,-85){9}
  \end{picture}
}}\,}

\def\arbrefg{\,{\scalebox{0.15}{
\begin{picture}(48,98) (349,-205)
    \SetWidth{2}
    \SetColor{Black}
    \Vertex(375,-252){12}
    \Line(375,-200)(375,-252)
    \Vertex(375,-202){9}
    \Line(376,-200)(395,-165)
    \Line(373,-201)(354,-164)
    \Vertex(353,-161){9}
    \Vertex(395,-163){9}
    \Line(353,-160)(353,-113)
    \Vertex(353,-111){9}
    \Line(395,-159)(395,-112)
    \Vertex(395,-111){9}
    \Line(395,-60)(395,-112)
    \Vertex(395,-60){9}
  \end{picture}
}}\,}

\def\arbrefh{\,{\scalebox{0.15}{
  \begin{picture}(68,88) (329,-215)
    \SetWidth{2}
    \SetColor{Black}
    \Vertex(375,-262){12}
    \Line(375,-210)(375,-262)
    \Vertex(375,-212){9}
    \Line(376,-210)(400,-175)
    \Line(373,-211)(350,-174)
    \Vertex(350,-171){9}
    \Vertex(400,-173){9}
    \Line(347,-168)(330,-131)
    \Line(353,-168)(365,-133)
    \Vertex(330,-131){9}
    \Vertex(365,-131){9}
    \Line(400,-168)(400,-131)
    \Vertex(400,-131){9}

  \end{picture}
}}\,}
\def\arbrefhh{\,{\scalebox{0.15}{
  \begin{picture}(68,88) (329,-215)
    \SetWidth{2}
    \SetColor{Black}
    \Vertex(375,-262){12}
    \Line(375,-210)(375,-262)
    \Vertex(375,-212){9}
    \Line(376,-210)(400,-175)
    \Line(373,-211)(350,-174)
    \Vertex(350,-171){9}
    \Vertex(400,-173){9}
    \Line(398,-168)(385,-131)
    \Line(405,-168)(422,-133)
    \Vertex(422,-131){9}
    \Vertex(385,-131){9}
    \Line(350,-168)(350,-131)
    \Vertex(350,-131){9}
  \end{picture}
}}\,}

\def\arbrefi{\,{\scalebox{0.15}{
 \begin{picture}(90,46) (330,-257)
    \SetWidth{2}
    \SetColor{Black}
    \Vertex(375,-304){12}
    \Line(375,-252)(375,-304)
    \Vertex(375,-254){9}
    \Line(376,-252)(395,-217)
    \Vertex(395,-215){9}
    \Line(374,-254)(335,-226)
    \Vertex(334,-224){9}
    \Line(375,-252)(356,-215)
    \Vertex(355,-215){9}
    \Line(374,-255)(417,-227)
    \Vertex(418,-225){9}
    \Line(333,-225)(333,-180)
    \Vertex(334,-180){9}
  \end{picture}
}}\,}
\def\arbrefii{\,{\scalebox{0.15}{
 \begin{picture}(90,46) (330,-257)
    \SetWidth{2}
    \SetColor{Black}
    \Vertex(375,-304){12}
    \Line(375,-252)(375,-304)
    \Vertex(375,-254){9}
    \Line(376,-252)(395,-217)
    \Vertex(395,-215){9}
    \Line(374,-254)(335,-226)
    \Vertex(334,-224){9}
    \Line(375,-252)(356,-215)
    \Vertex(355,-215){9}
    \Line(374,-255)(417,-227)
    \Vertex(418,-225){9}
    \Line(355,-220)(355,-170)
    \Vertex(355,-170){9}
  \end{picture}
}}\,}
\def\arbrefiii{\,{\scalebox{0.15}{
 \begin{picture}(90,46) (330,-257)
    \SetWidth{2}
    \SetColor{Black}
    \Vertex(375,-304){12}
    \Line(375,-252)(375,-304)
    \Vertex(375,-254){9}
    \Line(376,-252)(395,-217)
    \Vertex(395,-215){9}
    \Line(374,-254)(335,-226)
    \Vertex(334,-224){9}
    \Line(375,-252)(356,-215)
    \Vertex(355,-215){9}
    \Line(374,-255)(417,-227)
    \Vertex(418,-225){9}
    \Line(395,-220)(395,-170)
    \Vertex(395,-170){9}
  \end{picture}
}}\,}
\def\arbrefiiii{\,{\scalebox{0.15}{
 \begin{picture}(90,46) (330,-257)
    \SetWidth{2}
    \SetColor{Black}
    \Vertex(375,-304){12}
    \Line(375,-252)(375,-304)
    \Vertex(375,-254){9}
    \Line(376,-252)(395,-217)
    \Vertex(395,-215){9}
    \Line(374,-254)(335,-226)
    \Vertex(334,-224){9}
    \Line(375,-252)(356,-215)
    \Vertex(355,-215){9}
    \Line(374,-255)(417,-227)
    \Vertex(418,-225){9}
    \Line(418,-225)(418,-180)
    \Vertex(418,-180){9}
  \end{picture}
}}\,}

\def\arbrefj{\,{\scalebox{0.15}{
\begin{picture}(8,204) (370,-99)
    \SetWidth{2}
    \SetColor{Black}
    \Line(374,-95)(374,-51)
    \Vertex(374,-48){9}
    \Vertex(375,-96){9}
    \Line(374,-44)(374,0)
    \Vertex(374,3){9}
    \Line(374,6)(374,50)
    \Vertex(374,53){9}
    \Line(374,53)(374,98)
    \Vertex(374,101){9}
    \Vertex(350,-146){12}
    \Line(375,-96)(350,-146)
    \Vertex(325,-96){9}
    \Line(325,-96)(350,-146)
  \end{picture}
}}\,}

\def\arbrefk{\,{\scalebox{0.15}{
 \begin{picture}(48,150) (349,-205)
    \SetWidth{2}
    \SetColor{Black}
    \Line(376,-148)(395,-113)
    \Line(373,-149)(354,-112)
    \Vertex(353,-109){9}
    \Vertex(395,-111){9}
    \Line(395,-108)(395,-61)
    \Vertex(395,-59){9}
    \Line(374,-200)(374,-153)
    \Vertex(374,-149){9}
    \Vertex(374,-202){9}
    \Vertex(398,-252){12}
    \Line(374,-202)(398,-252)
    \Vertex(422,-202){9}
    \Line(398,-252)(422,-202)
  \end{picture}
}}\,}

\def\arbrefkk{\,{\scalebox{0.15}{
 \begin{picture}(48,150) (349,-205)
    \SetWidth{2}
    \SetColor{Black}
    \Line(376,-148)(395,-113)
    \Line(373,-149)(354,-112)
    \Vertex(353,-109){9}
    \Vertex(395,-111){9}
    \Line(395,-108)(395,-61)
    \Vertex(395,-59){9}
    \Line(374,-200)(374,-153)
    \Vertex(374,-149){9}
    \Vertex(374,-202){9}
    \Vertex(350,-252){12}
    \Line(374,-202)(350,-252)
    \Vertex(326,-202){9}
    \Line(350,-252)(326,-202)
  \end{picture}
}}\,}

\def\arbrefl{\,{\scalebox{0.15}{
 \begin{picture}(48,153) (349,-150)
    \SetWidth{2}
    \SetColor{Black}
    \Vertex(375,-147){9}
    \Line(376,-145)(395,-110)
    \Line(373,-146)(354,-109)
    \Vertex(353,-106){9}
    \Vertex(395,-108){9}
    \Line(395,-105)(395,-58)
    \Vertex(395,-56){9}
    \Line(395,-52)(395,-5)
    \Vertex(395,-1){9}
    \Vertex(400,-197){12}
    \Line(375,-147)(400,-197)
    \Vertex(425,-147){9}
    \Line(400,-197)(425,-147)
  \end{picture}
}}\,}

\def\arbrefll{\,{\scalebox{0.15}{
 \begin{picture}(48,153) (349,-150)
    \SetWidth{2}
    \SetColor{Black}
    \Vertex(375,-147){9}
    \Line(376,-145)(395,-110)
    \Line(373,-146)(354,-109)
    \Vertex(353,-106){9}
    \Vertex(395,-108){9}
    \Line(395,-105)(395,-58)
    \Vertex(395,-56){9}
    \Line(395,-52)(395,-5)
    \Vertex(395,-1){9}
    \Vertex(350,-197){12}
    \Line(375,-147)(350,-197)
    \Vertex(325,-147){9}
    \Line(350,-197)(325,-147)
  \end{picture}
}}\,}

\def\arbrefm{\,{\scalebox{0.15}{
\begin{picture}(48,98) (349,-205)
    \SetWidth{2}
    \SetColor{Black}
    \Vertex(375,-202){9}
    \Line(376,-200)(395,-165)
    \Line(373,-201)(354,-164)
    \Vertex(353,-161){9}
    \Vertex(395,-163){9}
    \Line(353,-160)(353,-113)
    \Vertex(353,-111){9}
    \Line(395,-159)(395,-112)
    \Vertex(395,-111){9}
    \Vertex(400,-252){12}
    \Line(375,-202)(400,-252)
    \Vertex(425,-202){9}
    \Line(400,-252)(425,-202)
  \end{picture}
}}\,}

\def\arbrefmm{\,{\scalebox{0.15}{
\begin{picture}(48,98) (349,-205)
    \SetWidth{2}
    \SetColor{Black}
    \Vertex(375,-202){9}
    \Line(376,-200)(395,-165)
    \Line(373,-201)(354,-164)
    \Vertex(353,-161){9}
    \Vertex(395,-163){9}
    \Line(353,-160)(353,-113)
    \Vertex(353,-111){9}
    \Line(395,-159)(395,-112)
    \Vertex(395,-111){9}
    \Vertex(350,-252){12}
    \Line(375,-202)(350,-252)
    \Vertex(325,-202){9}
    \Line(350,-252)(325,-202)
  \end{picture}
}}\,}

\def\arbrefn{\,{\scalebox{0.15}{
  \begin{picture}(68,88) (329,-215)
    \SetWidth{2}
    \SetColor{Black}
    \Vertex(375,-212){9}
    \Line(376,-210)(395,-175)
    \Line(373,-211)(354,-174)
    \Vertex(353,-171){9}
    \Vertex(395,-173){9}
    \Line(351,-168)(332,-131)
    \Line(355,-168)(374,-133)
    \Vertex(333,-131){9}
    \Vertex(374,-131){9}
    \Vertex(400,-262){12}
    \Line(375,-212)(400,-262)
    \Vertex(425,-212){9}
    \Line(400,-262)(425,-212)
  \end{picture}
}}\,}

\def\arbrefnn{\,{\scalebox{0.15}{
  \begin{picture}(68,88) (329,-215)
    \SetWidth{2}
    \SetColor{Black}
    \Vertex(375,-212){9}
    \Line(376,-210)(395,-175)
    \Line(373,-211)(354,-174)
    \Vertex(353,-171){9}
    \Vertex(395,-173){9}
    \Line(351,-168)(332,-131)
    \Line(355,-168)(374,-133)
    \Vertex(333,-131){9}
    \Vertex(374,-131){9}
    \Vertex(350,-262){12}
    \Line(375,-212)(350,-262)
    \Vertex(325,-212){9}
    \Line(350,-262)(325,-212)
  \end{picture}
}}\,}

\def\arbrefnnn{\,{\scalebox{0.15}{
  \begin{picture}(68,88) (329,-215)
    \SetWidth{2}
    \SetColor{Black}
    \Vertex(375,-212){9}
    \Line(376,-210)(395,-175)
    \Line(373,-211)(354,-174)
    \Vertex(353,-171){9}
    \Vertex(395,-173){9}
    \Line(393,-168)(377,-131)
    \Line(398,-168)(418,-133)
    \Vertex(418,-131){9}
    \Vertex(377,-131){9}
    \Vertex(400,-262){12}
    \Line(375,-212)(400,-262)
    \Vertex(425,-212){9}
    \Line(400,-262)(425,-212)
  \end{picture}
}}\,}

\def\arbrefnnnn{\,{\scalebox{0.15}{
  \begin{picture}(68,88) (329,-215)
    \SetWidth{2}
    \SetColor{Black}
    \Vertex(375,-212){9}
    \Line(376,-210)(395,-175)
    \Line(373,-211)(354,-174)
    \Vertex(353,-171){9}
    \Vertex(395,-173){9}
    \Line(393,-168)(377,-131)
    \Line(398,-168)(418,-133)
    \Vertex(418,-131){9}
    \Vertex(377,-131){9}
    \Vertex(350,-262){12}
    \Line(375,-212)(350,-262)
    \Vertex(325,-212){9}
    \Line(350,-262)(325,-212)
  \end{picture}
}}\,}

\def\arbrefo{\,{\scalebox{0.15}{
 \begin{picture}(90,46) (330,-257)
    \SetWidth{2}
    \SetColor{Black}
    \Vertex(375,-254){9}
    \Line(376,-252)(395,-217)
    \Vertex(395,-215){9}
    \Line(374,-254)(335,-226)
    \Vertex(334,-224){9}
    \Line(375,-252)(356,-215)
    \Vertex(355,-215){9}
    \Line(374,-255)(417,-227)
    \Vertex(418,-225){9}
    \Vertex(400,-304){12}
    \Line(375,-254)(400,-304)
    \Vertex(425,-254){9}
    \Line(400,-304)(425,-254)
  \end{picture}
}}\,}

\def\arbrefoo{\,{\scalebox{0.15}{
 \begin{picture}(90,46) (330,-257)
    \SetWidth{2}
    \SetColor{Black}
    \Vertex(375,-254){9}
    \Line(376,-252)(395,-217)
    \Vertex(395,-215){9}
    \Line(374,-254)(335,-226)
    \Vertex(334,-224){9}
    \Line(375,-252)(356,-215)
    \Vertex(355,-215){9}
    \Line(374,-255)(417,-227)
    \Vertex(418,-225){9}
    \Vertex(350,-304){12}
    \Line(375,-254)(350,-304)
    \Vertex(325,-254){9}
    \Line(350,-304)(325,-254)
  \end{picture}
}}\,}

\def\arbrefp{\,{\scalebox{0.15}{
\begin{picture}(8,156) (370,-147)
    \SetWidth{2}
    \SetColor{Black}
    \Line(374,-143)(374,-99)
    \Vertex(374,-96){9}
    \Vertex(375,-144){9}
    \Line(374,-92)(374,-48)
    \Vertex(374,-45){9}
    \Line(374,-42)(374,2)
    \Vertex(374,5){9}
    \Vertex(350,-194){12}
    \Line(375,-144)(350,-194)
    \Vertex(325,-144){9}
    \Line(325,-144)(350,-194)
    \Vertex(325,-96){9}
    \Line(325,-144)(325,-96)
  \end{picture}
}}\,}

\def\arbrefq{\,{\scalebox{0.15}{
  \begin{picture}(68,88) (329,-215)
    \SetWidth{2}
    \SetColor{Black}
    \Vertex(375,-212){12}
    \Line(376,-210)(400,-175)
    \Line(373,-211)(350,-174)
    \Vertex(350,-171){9}
    \Vertex(400,-173){9}
    \Line(347,-168)(330,-131)
    \Line(353,-168)(365,-133)
    \Vertex(330,-131){9}
    \Vertex(365,-131){9}
    \Line(400,-168)(400,-131)
    \Vertex(400,-131){9}
    \Vertex(365,-81){9}
    \Line(365,-131)(365,-81)
  \end{picture}
}}\,}

\def\arbrefqq{\,{\scalebox{0.15}{
  \begin{picture}(68,88) (329,-215)
    \SetWidth{2}
    \SetColor{Black}
    \Vertex(375,-212){12}
    \Line(376,-210)(400,-175)
    \Line(373,-211)(350,-174)
    \Vertex(350,-171){9}
    \Vertex(400,-173){9}
    \Line(398,-168)(385,-131)
    \Line(405,-168)(422,-133)
    \Vertex(422,-131){9}

    \Vertex(385,-131){9}
    \Line(350,-168)(350,-131)

    \Vertex(350,-131){9}
    \Vertex(422,-81){9}
    \Line(422,-81)(422,-131)
  \end{picture}
}}\,}

\def\arbrefr{\,{\scalebox{0.15}{ 
\begin{picture}(8,106) (370,-197)
    \SetWidth{2}
    \SetColor{Black}
    \Line(374,-193)(374,-149)
    \Vertex(374,-146){9}
    \Vertex(375,-194){9}
    \Line(374,-142)(374,-98)
    \Vertex(374,-95){9}
    \Line(425,-193)(425,-149)
    \Vertex(425,-146){9}
    \Vertex(425,-194){9}
    \Line(425,-142)(425,-98)
    \Vertex(425,-95){9}
    \Vertex(400,-244){12}
    \Line(425,-194)(400,-244)
    \Line(375,-194)(400,-244)
  \end{picture}
}}\,}

 \def\arbrefs{\,{\scalebox{0.15}{ 
  \begin{picture}(48,48) (349,-255)
    \SetWidth{2}
    \SetColor{Black}
    \Vertex(375,-252){9}
    \Line(375,-252)(395,-202)
    \Line(375,-252)(353,-202)
    \Vertex(353,-202){9}
    \Vertex(395,-202){9}
    \Line(430,-252)(430,-202)
    \Vertex(430,-202){9}
    \Vertex(430,-252){9}
    \Line(430,-202)(430,-152)
    \Vertex(430,-152){9}
    \Line(403,-302)(430,-252)
    \Line(403,-302)(375,-252)
    \Vertex(403,-302){12}
  \end{picture}
}}}

\def\arbrefss{\,{\scalebox{0.15}{ 
  \begin{picture}(48,48) (349,-255)
    \SetWidth{2}
    \SetColor{Black}
    \Vertex(430,-252){9}
    \Line(430,-252)(450,-202)
    \Line(430,-252)(410,-202)
    \Vertex(410,-202){9}
    \Vertex(450,-202){9}
    \Line(375,-252)(375,-202)
    \Vertex(375,-202){9}
    \Vertex(375,-252){9}
    \Line(375,-202)(375,-152)
    \Vertex(375,-152){9}
    \Line(403,-302)(430,-252)
    \Line(403,-302)(375,-252)
    \Vertex(403,-302){12}
  \end{picture}
}}}

\def\arbreft{\,{\scalebox{0.15}{ 
  \begin{picture}(48,48) (349,-255)
    \SetWidth{2}
    \SetColor{Black}
    \Vertex(445,-252){9}
    \Line(445,-252)(465,-202)
    \Line(445,-252)(425,-202)
    \Vertex(425,-202){9}
    \Vertex(465,-202){9}
  \Vertex(375,-252){9}
    \Line(375,-252)(395,-202)
    \Line(375,-252)(355,-202)
    \Vertex(395,-202){9}
    \Vertex(355,-202){9}
    \Line(410,-302)(445,-252)
    \Line(410,-302)(375,-252)
    \Vertex(410,-302){12}
  \end{picture}
}}}

\def\arbrefv{\,{\scalebox{0.15}{
 \begin{picture}(90,46) (330,-257)
    \SetWidth{2}
    \SetColor{Black}
    \Vertex(375,-254){12}
    \Line(376,-252)(395,-217)
    \Vertex(395,-215){9}
    \Line(374,-254)(335,-226)
    \Vertex(335,-226){9}
    \Line(375,-252)(356,-215)
    \Vertex(355,-215){9}
    \Line(374,-255)(417,-227)
    \Vertex(418,-225){9}
    \Line(335,-226)(335,-180)
    \Vertex(335,-180){9}
    \Line(335,-226)(335,-130)
    \Vertex(335,-130){9}
  \end{picture}
}}\,}
\def\arbrefvv{\,{\scalebox{0.15}{
 \begin{picture}(90,46) (330,-257)
    \SetWidth{2}
    \SetColor{Black}
    \Vertex(375,-254){12}
    \Line(376,-252)(395,-217)
    \Vertex(395,-215){9}
    \Line(374,-254)(335,-226)
    \Vertex(334,-224){9}
    \Line(375,-252)(356,-215)
    \Vertex(356,-215){9}
    \Line(374,-255)(417,-227)
    \Vertex(418,-225){9}
    \Line(356,-215)(356,-170)
    \Vertex(356,-170){9}
    \Line(356,-215)(356,-120)
    \Vertex(356,-120){9}
  \end{picture}
}}\,}
\def\arbrefvvv{\,{\scalebox{0.15}{
 \begin{picture}(90,46) (330,-257)
    \SetWidth{2}
    \SetColor{Black}
    \Vertex(375,-254){12}
    \Line(376,-252)(395,-217)
    \Vertex(395,-215){9}
    \Line(374,-254)(335,-226)
    \Vertex(335,-226){9}
    \Line(375,-252)(356,-215)
    \Vertex(356,-215){9}
    \Line(374,-255)(417,-227)
    \Vertex(418,-225){9}
    \Line(395,-215)(395,-170)
    \Vertex(395,-170){9}
    \Line(395,-215)(395,-120)
    \Vertex(395,-120){9}
  \end{picture}
}}\,}
\def\arbrefvvvv{\,{\scalebox{0.15}{
 \begin{picture}(90,46) (330,-257)
    \SetWidth{2}
    \SetColor{Black}
    \Vertex(375,-254){12}
    \Line(376,-252)(395,-217)
    \Vertex(395,-215){9}
    \Line(374,-254)(335,-226)
    \Vertex(335,-226){9}
    \Line(375,-252)(356,-215)
    \Vertex(355,-215){9}
    \Line(374,-255)(417,-227)
    \Vertex(418,-225){9}
    \Line(418,-225)(418,-180)
    \Vertex(418,-180){9}
    \Line(418,-225)(418,-130)
    \Vertex(418,-130){9}
  \end{picture}
}}\,}

\def\arbrefu{\,{\scalebox{0.15}{
 \begin{picture}(90,46) (330,-257)
    \SetWidth{2}
    \SetColor{Black}
    \Vertex(375,-254){12}
    \Line(376,-252)(395,-217)
    \Vertex(395,-215){9}
    \Line(374,-254)(335,-226)
    \Vertex(335,-226){9}
    \Line(375,-252)(356,-215)
    \Vertex(355,-215){9}
    \Line(374,-255)(417,-227)
    \Vertex(418,-225){9}
    \Line(335,-226)(353,-180)
    \Vertex(353,-180){9}
 \Line(335,-226)(317,-180)
    \Vertex(317,-180){9}
  \end{picture}
}}\,}
\def\arbrefuu{\,{\scalebox{0.15}{
 \begin{picture}(90,46) (330,-257)
    \SetWidth{2}
    \SetColor{Black}
    \Vertex(375,-254){12}
    \Line(376,-252)(395,-217)
    \Vertex(395,-215){9}
    \Line(374,-254)(335,-226)
    \Vertex(335,-226){9}
    \Line(375,-252)(356,-215)
    \Vertex(356,-215){9}
    \Line(374,-255)(417,-227)
    \Vertex(418,-225){9}
    \Line(356,-215)(374,-170)
    \Vertex(374,-170){9}
\Line(356,-220)(338,-170)
    \Vertex(338,-170){9}
  \end{picture}
}}\,}
\def\arbrefuuu{\,{\scalebox{0.15}{
 \begin{picture}(90,46) (330,-257)
    \SetWidth{2}
    \SetColor{Black}
    \Vertex(375,-254){12}
    \Line(376,-252)(395,-217)
    \Vertex(395,-215){9}
    \Line(374,-254)(335,-226)
    \Vertex(334,-224){9}
    \Line(375,-252)(356,-215)
    \Vertex(355,-215){9}
    \Line(374,-255)(417,-227)
    \Vertex(418,-225){9}
    \Line(395,-220)(413,-170)
    \Vertex(413,-170){9}
 \Line(395,-220)(377,-170)
    \Vertex(377,-170){9}
  \end{picture}
}}\,}
\def\arbrefuuuu{\,{\scalebox{0.15}{
 \begin{picture}(90,46) (330,-257)
    \SetWidth{2}
    \SetColor{Black}
    \Vertex(375,-254){12}
    \Line(376,-252)(395,-217)
    \Vertex(395,-215){9}
    \Line(374,-254)(335,-226)
    \Vertex(334,-224){9}
    \Line(375,-252)(356,-215)
    \Vertex(355,-215){9}
    \Line(374,-255)(417,-227)
    \Vertex(418,-225){9}
    \Line(418,-227)(400,-180)
    \Vertex(400,-180){9}
 \Line(418,-230)(436,-180)
    \Vertex(436,-180){9}
  \end{picture}
}}\,}

\def\arbrefw{\,{\scalebox{0.15}{
 \begin{picture}(90,46) (330,-257)
    \SetWidth{2}
    \SetColor{Black}
    \Vertex(375,-254){12}
    \Line(376,-252)(395,-217)
    \Vertex(395,-215){9}
    \Line(374,-254)(335,-226)
    \Vertex(335,-226){9}
    \Line(375,-252)(356,-215)
    \Vertex(356,-215){9}
    \Line(374,-255)(417,-227)
    \Vertex(418,-225){9}
    \Line(335,-225)(334,-180)
    \Vertex(334,-180){9}
\Line(356,-215)(356,-165)
    \Vertex(356,-165){9}
  \end{picture}
}}\,}

\def\arbrefww{\,{\scalebox{0.15}{
 \begin{picture}(90,46) (330,-257)
    \SetWidth{2}
    \SetColor{Black}
    \Vertex(375,-254){12}
    \Line(376,-252)(395,-217)
    \Vertex(395,-215){9}
    \Line(374,-254)(335,-226)
    \Vertex(335,-226){9}
    \Line(375,-252)(356,-215)
    \Vertex(355,-215){9}
    \Line(374,-255)(417,-227)
    \Vertex(418,-225){9}
    \Line(395,-215)(395,-170)
    \Vertex(395,-170){9}
    \Line(335,-226)(335,-176)
    \Vertex(335,-176){9}
  \end{picture}

}}\,}

\def\arbrefwww{\,{\scalebox{0.15}{
 \begin{picture}(90,46) (330,-257)
    \SetWidth{2}
    \SetColor{Black}
    \Vertex(375,-254){12}
    \Line(376,-252)(395,-217)
    \Vertex(395,-215){9}
    \Line(374,-254)(335,-226)
    \Vertex(334,-224){9}
    \Line(375,-252)(356,-215)
    \Vertex(356,-215){9}
    \Line(374,-255)(417,-227)
    \Vertex(418,-225){9}
    \Line(395,-215)(395,-170)
    \Vertex(395,-170){9}
    \Line(356,-215)(356,-170)
    \Vertex(356,-170){9}  
  \end{picture}
}}\,}

\def\arbrefwwww{\,{\scalebox{0.15}{
 \begin{picture}(90,46) (330,-257)
    \SetWidth{2}
    \SetColor{Black}
    \Vertex(375,-254){12}
    \Line(376,-252)(395,-217)
    \Vertex(395,-215){9}
    \Line(374,-254)(335,-226)
    \Vertex(335,-226){9}
    \Line(375,-252)(356,-215)
    \Vertex(355,-215){9}
    \Line(374,-255)(417,-227)
    \Vertex(418,-225){9}
    \Line(335,-226)(335,-176)
    \Vertex(335,-176){9}
    \Line(417,-227)(417,-177)
    \Vertex(417,-177){9}
  \end{picture}
}}\,}

\def\arbrefwwwww{\,{\scalebox{0.15}{
 \begin{picture}(90,46) (330,-257)
    \SetWidth{2}
    \SetColor{Black}
    \Vertex(375,-254){12}
    \Line(376,-252)(395,-217)
    \Vertex(395,-215){9}
    \Line(374,-254)(335,-226)
    \Vertex(334,-224){9}
    \Line(375,-252)(356,-215)
    \Vertex(356,-215){9}
    \Line(374,-255)(417,-227)
    \Vertex(418,-225){9}
    \Line(356,-215)(356,-165)
    \Vertex(356,-165){9}
    \Line(417,-227)(417,-177)
    \Vertex(417,-177){9}
  \end{picture}
}}\,}

\def\arbrefwwwwww{\,{\scalebox{0.15}{
 \begin{picture}(90,46) (330,-257)
    \SetWidth{2}
    \SetColor{Black}
    \Vertex(375,-254){12}
    \Line(376,-252)(395,-217)
    \Vertex(395,-215){9}
    \Line(374,-254)(335,-226)
    \Vertex(334,-224){9}
    \Line(375,-252)(356,-215)
    \Vertex(356,-215){9}
    \Line(374,-255)(417,-227)
    \Vertex(418,-225){9}
    \Line(395,-217)(395,-167)
    \Vertex(395,-167){9}
    \Line(417,-227)(419,-177)
    \Vertex(419,-177){9}
  \end{picture}
}}\,}

\def\arbrefx{\,{\scalebox{0.15}{
\begin{picture}(90,48) (330,-255)
    \SetWidth{2}
    \SetColor{Black}
    \Vertex(375,-252){12}
    \Line(375,-252)(390,-210)
    \Vertex(390,-210){9}
    \Line(375,-252)(330,-222)
    \Vertex(330,-222){9}
    \Line(375,-252)(350,-213)
    \Vertex(350,-213){9}
    \Line(375,-252)(410,-213)
    \Vertex(410,-213){9}
    \Line(375,-252)(370,-210)
    \Vertex(370,-210){9}
    \Line(375,-252)(430,-222)
    \Vertex(430,-222){9}

  \end{picture}}}\,}
%%%%%%%%%%%%%%%%%%%%%%%%%%%%%%%%trees with 6 degree%%%%%%%%%%%%%%%%%%

%%%%%%%%%%%%%%%%%%%%%%%%%%%%%%%%%%%%%%%%%Label tree%%%%%%%%%%%%%%%%%%%%%%%%%%%
%%%%%%%%%%%%%%%%%%%%%%%%%%%%%%%%%%%%%%%%%%%%%%%%%%%%%%%%%%%%%%%%%%%%%%%%%%%%%%

\def\racineLabae{{\scalebox{0.3}{
\begin{picture}(12,12)(38,-38)
\SetWidth{0.5} \SetColor{Black} \Vertex(45,-28){5.66}
\Text(37,-20)[lb]{\Huge{\Black{$e$}}}
\end{picture}}}}

\def\racineLab1{{\scalebox{0.3}{
\begin{picture}(12,12)(38,-38)
\SetWidth{0.5} \SetColor{Black} \Vertex(45,-28){5.66}
\Text(37,-20)[lb]{\Huge{\Black{$1$}}}
\end{picture}}}}

\def\racineLabai{{\scalebox{0.3}{
\begin{picture}(12,12)(38,-38)
\SetWidth{0.5} \SetColor{Black} \Vertex(45,-28){5.66}
\Text(37,-20)[lb]{\Huge{\Black{$a_i$}}}
\end{picture}}}}

%%%%%%%%%%%%%%%%%%%%%%%Table%%%%%%%%%%%%%%%%%%
%%%%%%%%%%%%%%%%%%%%%%%%%%%%%%%%%%%%%%%%%%%%%%

%%%%%%%%%%%%%%%%%%%%%%%%%%%%%%%%%%%%%%%%%%%%%
%%%%%%%%%%%%%%%%%%%%%%%%%%%%%%%%%%%%%%%%%%%%%
\begin{document}
%%%%%%%%%%%%%%%%%%%%%%%%%%%%%%%%%%%%%%%%%%%%%
%%%%%%%%%%%%%%%%%%%%%%%%%%%%%%%%%%%%%%%%%%%%%
%%%%%%%%%%%%%%%%%%%%%%%%%%%%%%%%%%%%%%%%%%%%%
\title[Pre-Lie Magnus Expansion]
      {On pre-Lie Magnus Expansion}

\author{Mahdi J. Hasan Al-Kaabi}

\address {Mathematics Department, College of Science, Al-Mustansiriya University, Palestine Street, P.O.Box 14022, Baghdad, IRAQ. E-mail address:Mahdi.Alkaabi@math.univ-bpclermont.fr}

%%%%%%%%%%%%%%%%%%%%%%%%%%%%%%%%%%%%%%%%%%%%%%%%%%%%%%%%%%%%%%%%%%%
\date{}
%%%%%%%%%%%%%%%%%%%%%%%%%%%%%%%%%%%%%%%%%%%%%%%%%%%%%%%%%%%%%%%%%%%
\begin{abstract}
In this paper, we study the classical and pre-Lie Magnus expansions, discussing how we can find a recursion for the pre-Lie case which already incorporates the pre-Lie identity. We give a combinatorial vision of a numerical method proposed by S. Blanes, F. Casas, and J. Ros in \cite{BFJ2000}, on a writing of the classical Magnus expansion in a free Lie algebra, using a pre-Lie structure.   
\end{abstract}

\maketitle
\tableofcontents

%%%%%%%%%%%%%%%%%%%%%%%%%%%%%%%%%%%%%%%%%%%%%%%%%%%%%%%%%%%%%%%%%%%
%%%%%%%%%%%%%%%%%%%%%%%%%%%%%%%%%%%%%%%%%%%%%%%%%%%%%%%%%%%%%%%%%%%
\section{Introduction}
Wilhelm Magnus (1907-1990) is a topologist, an algebraist, an authority on differential equations and on special functions and a mathematical physicist. One of his long-lasting constructions is a tool to solve classical linear differential equations for linear operators, now named Magnus expansion \cite{WM54}, which has found applications in numerous areas, in particular in quantum chemistry and theoretical physics.\\

Magnus expansion is a formal expansion of the logarithm of the solution of the following linear differential equation:

\begin{equation}
\dot{y}(t):= \frac{d}{dt} y(t) = a(t) y(t), \;y(0)=1.
\end{equation}

Many works have been raised to write the classical Magnus expansion in terms of algebro-combinatorial structures: Rota-Baxter algebras, dendriform algebras, pre-Lie algebras and others, see for example \cite{KM09, K.M09, CP13} for more details about these works. Particularly, we study a generalization of the latter called pre-Lie Magnus expansion \cite{AR81}, and we give a brief survey about this expansion in this paper. An approach has been developed to encode the terms of the classical and pre-Lie Magnus expansions respectively, by A. Iserles with S. P. No\!\!/\;\!rsett using planar binary trees \cite{IN99}, and by K. Ebrahimi-Fard with D. Manchon using planar rooted trees \cite{K.M09} respectively.\\

F. Chapoton and F. Patras introduced a concrete formula in \cite{CP13} using the Grossman-Larson algebra. We study this formula briefly in Sections \ref{P-LME} and \ref{AMERT}, and we compare its terms with another pre-Lie Magnus expansion terms obtained by K. Ebrahimi-Fard and D. Manchon in \cite{K.M09}. We observe that this formula can be considered as optimal up to degree seven, with respect to the number of terms in the pre-Lie Magnus expansion. The question, raised by K. Ebrahimi-Fard, of writing an optimal (i.e. with a minimal number of terms) pre-Lie Magnus expansion at any order remains open.\\

In Section \ref{section IIII}, we look at the pre-Lie Magnus expansion in the free Lie algebra $\Cal{L}(E)$. The weighted anti-symmetry relations lead to a further reduction of the number of terms. The particular case of one single generator in each degree is closely related to the work of S. Blanes, F. Casas and J. Ros \cite{BFJ2000}. We give a combinatorial interpretation of this work, using the monomial basis of free Lie algebra $\Cal{L}(E)$\label{CL} described in \cite{AM20}.

\section{Classical Magnus expansion}\label{CME} 
W. Magnus provides an exponential representation of the solution of the well-known classical initial value problem:
\begin{equation}\label{diff.eq}
\dot{Y}(t):= \frac{d}{dt} Y(t) = A(t) Y(t), \hbox{ with initial condition } Y(0)=1,
\end{equation}
where $Y(t),\,A(t)$ are linear operators depending on a real variable $t$, and $1$ is the identity operator. Magnus considers the problem \eqref{diff.eq} in a non-commutative context. The problem, according to Magnus' point of view, is to define an operator $\Omega(t)$, depending on $A$, with $\Omega(0)=0$ such that :
\begin{equation}
Y(t) = exp\big(\int\limits_{0}^{t}{\dot{\Omega}(s) ds}\big) = \sum\limits_{n\geq 0}^{}{\frac{\Omega(t)^n}{n!}}.
\end{equation}
 He obtains a differential equation leading to the recursively defined expansion named after him:
\begin{equation}\label{magnus formula}
\Omega(t) = \int\limits_{0}^{t}{\dot{\Omega}(s) ds }= \int\limits_{0}^{t}{A(s) ds} + \int\limits_{0}^{t} \sum_{n > 0} \frac{B_n}{n!} ad^{(n)}_{\int\limits_{0}^{s}{\dot{\Omega}(u) du }}{[A(s)]ds}, 
\end{equation} 
where $B_n$ are the Bernoulli numbers defined by:
$$\sum\limits_{m=0}^{\infty}{\frac{B_m}{m!}z^m} = \frac{z}{e^z-1} = 1 - \frac{1}{2}z + \frac{1}{12} z^2 - \frac{1}{720} z^4 + \cdots,$$
and $ad_{\Omega}$ is a shorthand for an iterated commutator:
$$ad^{0}_{\Omega} A = A, \,\, ad^{1}_{\Omega} A= [\Omega, A] = \Omega A - A \Omega, \,\, ad^{2}_{\Omega} A = [\Omega, [\Omega, A]],$$
and, in general, $ad_{\Omega}^{m+1} A = [\Omega, ad_{\Omega}^{m} A]$ \cite{WM54}.  Taking into account the numerical values of the first few Bernoulli numbers, the formula in \eqref{magnus formula} can be written: 
$$ \dot{\Omega}(t) = A(t) - \frac{1}{2} [ \Omega, A(t)] + \frac{1}{4} \big[[\Omega, A(t)], \Omega \big] + \frac{1}{12} \big[\Omega, [\Omega, A(t)]\big] + \cdots,$$
where $\dot{\Omega}(t) := \Omega'(t) = \frac{d}{dt} \Omega(t)$. Also, we can write the expansion in \eqref{magnus formula} as:

\begin{equation}
\Omega(t) = \sum_{n > 1}{\Omega}_n(t),
\end{equation}
where $\Omega_1(t) = \int\limits_{0}^{t}{A(s)\,ds}$, and in general:
\begin{equation}\label{generating formula}
\Omega_n(t) = \sum_{j=1}^{n-1}{\frac{B_j}{j!} \int\limits_{0}^{t}{S_n^{(j)}(s)\,ds}}, \hbox{ for } n \geq 2,
\end{equation}
where $S_n^{(1)} = [\Omega_{n-1}, A],\;S_n^{(n-1)}= ad_{\Omega_1}^{\,n-1}\big(A\big)$, and:
$$S_n^{(j)} = \sum_{m=1}^{n-j}{\big[\Omega_m, S_{n-m}^{(j-1)}\big]}, \hbox{ for } 2 \leq j \leq n-1.$$ 
The formula \eqref{generating formula} can be found in \cite{WM54}, \cite{BCR09}.

\section{Pre-Lie Magnus expansion}\label{P-LME}
In this section, we study an important generalization of the Magnus expansion in the pre-Lie setting: let $(\Cal{P\!L}, \rhd)$ be a pre-Lie algebra defined over a field $K$\label{K11}. The linear transformations $L_A$, for $A \in \Cal{P\!L}$, defined by $L_A: \Cal{P\!L} \rightarrow \Cal{P\!L}$, such that $L_{A}(B):= A \rhd B, \hbox{ for all } B \in \Cal{P\!L}$. Define $\dot{\Omega}:= \dot{\Omega}(\lambda A)$, for $A \in \Cal{P\!L}$, to be a formal power series in $\lambda\Cal{P\!L}[[\lambda]]$. Now, the classical Magnus expansion, described in \eqref{magnus formula}, can be rewritten as:

\begin{equation}\label{pre-Lie formula A}
\dot{\Omega}(\lambda A)(x) = \frac{L_{\rhd}[\dot{\Omega}]}{exp(L_{\rhd}[\dot{\Omega}]) - 1} (\lambda A)(x) = \sum_{m \geq 0}{\frac{B_m}{m!}L_{\rhd}[\dot{\Omega}]^{m}(\lambda A)(x)},
\end{equation} 
where $ L_{\rhd}[\dot{\Omega}]\big(\lambda A\big)(x) =  \big( \dot{\Omega} \rhd \lambda A \big)(x) = [ \int\limits_{0}^{x}{\dot{\Omega}(s) ds}, \lambda A(x)]$, $B_m$ are Bernoulli numbers, this formula is called pre-Lie Magnus expansion \cite{AR81}, \cite{KM09}. 

\begin{lem}
Let $A, B$ be linear operators depending on a real variable $x$, then the product: 
\begin{equation}
(A \rhd B)(x) := \Big[\!\int\limits_{0}^{x}{\!\!A(s) ds}, B(x)\Big]\,,
\end{equation}
verifies the pre-Lie identity, where $ [A(x), B(x)] = (A \cdot B - B \cdot A)(x)$.
\end{lem}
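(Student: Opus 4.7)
\medskip

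\noindent\textbf{Proof plan.} The plan is to introduce the antiderivative $\omega_A(x) := \int_0^x A(s)\,ds$, so that the product becomes simply $(A \rhd B)(x) = [\omega_A(x), B(x)]$, and then show by integration by parts that
\[
\omega_{A \rhd B}(x) \;=\; [\omega_A(x),\omega_B(x)] + \omega_{B \rhd A}(x).
\]
Indeed, writing $\omega_{A \rhd B}(x) = \int_0^x \bigl(\omega_A(s) B(s) - B(s)\omega_A(s)\bigr)\,ds$, and applying integration by parts to each piece using $\omega_A' = A$ and $\omega_B' = B$, the boundary terms assemble into $[\omega_A(x),\omega_B(x)]$, while the remaining integrals combine into $\int_0^x [\omega_B(s),A(s)]\,ds = \omega_{B \rhd A}(x)$. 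This is the main structural identity.

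Next, I would apply $[\,\cdot\,,C(x)]$ to both sides of this identity to obtain
\[
(A \rhd B)\rhd C - (B \rhd A)\rhd C \;=\; \bigl[[\omega_A,\omega_B],\,C\bigr].
\]
On the other hand, the two nested expressions $A \rhd (B \rhd C) = [\omega_A,[\omega_B,C]]$ and $B \rhd (A \rhd C) = [\omega_B,[\omega_A,C]]$ are related by the Jacobi identity in the underlying associative commutator algebra, giving
\[
A \rhd (B \rhd C) - B \rhd (A \rhd C) \;=\; \bigl[[\omega_A,\omega_B],\,C\bigr].
\]
Equating the two right-hand sides and rearranging yields the (left) pre-Lie identity
\[
(A \rhd B) \rhd C - A \rhd (B \rhd C) \;=\; (B \rhd A) \rhd C - B \rhd (A \rhd C),
\]
which is what must be shown.

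The only nontrivial step is the integration-by-parts computation that gives the key identity $\omega_{A\rhd B} - \omega_{B \rhd A} = [\omega_A,\omega_B]$; once this is in hand, the pre-Lie relation is essentially a packaging of Jacobi, since the symmetry of the associator in its first two arguments is reduced to the symmetry of $[[\omega_A,\omega_B],C]$ under swapping $A \leftrightarrow B$ together with a sign. No additional hypotheses on $A$ and $B$ are needed beyond enough regularity to justify the integration by parts, which is implicit in the setting.
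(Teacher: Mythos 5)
Your proof is correct. It rests on the same two ingredients as the paper's proof --- the Jacobi identity for the commutator and integration by parts, the latter being exactly the weight-zero Rota--Baxter property $I(A)\cdot I(B)=I\big(I(A)\cdot B+A\cdot I(B)\big)$ of the integral operator $I(A)(x)=\int_0^x A(s)\,ds$ that the paper invokes --- but you package them differently. The paper starts from the associator $(A\rhd B)\rhd C-A\rhd(B\rhd C)$ and transforms it by a chain of equalities into $(B\rhd A)\rhd C-B\rhd(A\rhd C)$, applying Jacobi first and then the Rota--Baxter relation inside the outer bracket. You instead isolate the intermediate identity
\[
I\big([I(A),B]\big)-I\big([I(B),A]\big)=\big[I(A),I(B)\big]
\]
(in your notation $\omega_{A\rhd B}-\omega_{B\rhd A}=[\omega_A,\omega_B]$, which is indeed equivalent to the Rota--Baxter relation) and then reduce the pre-Lie identity to the observation that both antisymmetrized combinations $(A\rhd B)\rhd C-(B\rhd A)\rhd C$ and $A\rhd(B\rhd C)-B\rhd(A\rhd C)$ equal $\big[[I(A),I(B)],C\big]$. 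What this buys is transparency: the symmetry of the associator in its first two arguments is traced back to a single commutator identity for the antiderivatives, rather than emerging at the end of a longer computation; the mathematical content is the same, and your integration-by-parts step is valid in the formal/operator setting the lemma is stated in.
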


\begin{proof}
Let $A, B, C$ be linear operators depending on a real variable $x$. Set $I(A)(x) := \int\limits_{0}^{x}{\!A(s) ds}$, then we have:

\begin{equation}\label{prop. of Int.}
I(A) \cdot I(B) = I \big( I(A) \cdot B + A \cdot I(B) \big),
\end{equation}

In other words, $I$ is a weight zero Rota-Baxter operator \footnote{For more details about Rota-Baxter operator, Rota-Baxter algebras see \cite[Paragraph 5.2]{KM09} and the references therein.}. Hence,

\begin{align*}
\hspace{-15pt}\Big(\big(A \rhd B\big) \rhd C\Big)(x) - \Big(A \rhd \big(B \rhd C\big)\Big)(x) & = \big[I\big([I(A), B]\big)(x), C(x)\big] - \big[[I(A)(x), [I(B)(x), C(x)]\big]&\\
& = \big[I\big([I(A), B]\big)(x), C(x)\big] - \Big( \big[[I(A)(x), I(B)(x)], C(x)\big]&\\
& \;\;\;\; + \big[I(B)(x), [I(A)(x), C(x)]\big] \Big)\;\;\;, \hbox{ (by the Jacobi identity), }&\\
& = \big[I\big(I(A) \cdot B - B \cdot I(A) \big)(x), C(x)\big] - \big[\big(I(A) \cdot I(B)- I(B) \cdot I(A) \big)(x), C(x)\big]&\\
& \;\;\;\; - \big[I(B)(x), [I(A)(x), C(x)]\big] &\\
& = \big[I\big(I(A) \cdot B \big)(x) - \big(I(A) \cdot I(B)\big)(x) + \big(I(B) \cdot I(A) \big)(x) - I\big(B \cdot I(A) \big)(x), C(x)\big] &\\
& \;\;\;\; - \big[I(B)(x), [I(A)(x), C(x)]\big] &\\
& = \big[I\big(I(B) \cdot A - A \cdot I(B)\big)(x), C(x)\big] - \big[I(B)(x), [I(A)(x), C(x)]\big]\;, \hbox{ (by \eqref{prop. of Int.}), }&\\
& = \big[I\big([I(B), A]\big)(x), C(x)\big] - \big[[I(B)(x), [I(A)(x), C(x)]\big]&\\
& = \Big(\big(B \rhd A\big) \rhd C\Big)(x) - \Big(B \rhd \big(A \rhd C\big)\Big)(x).&
\end{align*}
This proves the Lemma.
\end{proof}

\noindent Also, the formula \eqref{pre-Lie formula A} can be represented as\label{om1}:
\begin{equation}
\dot{\Omega}(\lambda A) = \sum_{ n > 0} { \dot{\Omega}_n(\lambda A)}, 
\end{equation}
where $\dot{\Omega}_1(\lambda A) = \lambda A$, and in general: 
\begin{equation}
\dot{\Omega}_n(\lambda A) = \sum\limits_{j=1}^{n-1}{\frac{B_j}{j!}\sum_{{k_1 + \cdots + k_j =\,n-1}\atop {k_1 \geq 1,\,\ldots,\,k_j \geq 1}}{\!\!\!\!L_{\rhd}[\dot{\Omega}_{k_1}]\big(L_{\rhd}[\dot{\Omega}_{k_2}] \big( \cdots ( L_{\rhd}[\dot{\Omega}_{k_j}](\lambda A))\cdots)\big)}}, \hbox{ for } n\geq 2.
\end{equation}
Here, we give few first terms of the pre-Lie Magnus expansion described above: 

\begin{align}\label{pre-Lie formula B}
\!\!\!\!\!\!\!\!\!\!\!\!\!\!\!\!\!\!\!\!\!\!\!\!\!\!\!\!\!\!\!\!\!\!\!\!\!\!\!\!\dot{\Omega}(\lambda A) = \,\,\lambda A - \lambda^2\frac{1}{2} (A \rhd A) + \lambda^3 \big(\frac{1}{4} (A \rhd A) \rhd A+ \frac{1}{12} A \rhd (A \rhd A)\big) &&
\end{align} 
\begin{align*}
&& \;\;\;\;- \lambda^4 \left( \frac{1}{8} ((A \rhd A) \rhd A) \rhd A + \frac{1}{24} ( A \rhd (A \rhd A)) \rhd A + \frac{1}{24} \big( A \rhd ((A \rhd A) \rhd A) + (A \rhd A ) \rhd (A \rhd A) \big) \right) + \Cal{O}(\lambda^5)  
\end{align*} 
 
There are many ways of writing the Magnus expansion, for pre-Lie and classical formulas, in various settings using Baker-Campbell-Hausdorff series, dendriform algebras, Rota-Baxter algebras, Solomon Idempotents and others, for more details about these works see \cite{AR81}, \cite{KM09}, \cite{CP13} and the references therein.\\

Using the pre-Lie identity, the pre- Lie Magnus expansion terms can be reduced: for the terms at third order, $\dot{\Omega}_3(\lambda A)$, no further reduction of terms is possible.  At fourth order, two terms can be reduced as follows:
\begin{equation}\label{fourth order}
\dot{\Omega}_4(\lambda A) = \lambda^4 \left( \frac{1}{8} \big((A \rhd A) \rhd A\big) \rhd A + \frac{1}{24} \big(( A \rhd (A \rhd A)\big) \rhd A + A \rhd \big((A \rhd A) \rhd A\big) + (A \rhd A ) \rhd (A \rhd A) \right)
\end{equation}
and, by pre-Lie identity, we have:
$$ (A \rhd A ) \rhd (A \rhd A) = \big( (A \rhd A) \rhd A\big) \rhd A - \big( A \rhd ( A \rhd A )\big) \rhd A + A \rhd \big((A \rhd A) \rhd A\big), $$
then \eqref{fourth order} equals:
$$ \lambda^4 \Big(\frac{1}{6} \big((A \rhd A) \rhd A\big) \rhd A + \frac{1}{12} x \rhd \big((A \rhd A) \rhd A\big)\Big).$$
At fifth order, $\dot{\Omega}_5(\lambda A)$, three terms out of ten can be removed \cite{KM09}. For more details about this reduction of pre-Lie Magnus expansion terms, see the next sections.\\

A beautiful way of writing the pre-Lie Magnus expansion is proposed by F. Chapoton and F. Patras in their joint work \cite{CP13}. We review here a part of their work corresponding to pre-Lie Magnus element, as follows: let $\big(\Cal{P\!L}(a), \rhd\big)$ be the free pre-Lie algebra with one generator $a$, and $\widehat{\Cal{P\!L}}(a)$ be its completion\footnote{For further details about the completed pre-Lie algebra see the references \cite{NJ80, AR81, D.M}.}. The Magnus element in $\widehat{\Cal{P\!L}}(a)$ is the (necessarily unique) solution $\dot{\Omega}$ to the equation\label{not-S}:

\begin{equation}
\dot{\Omega} = \Big( \frac{\dot{\Omega}}{exp(\dot{\Omega}) - 1 }\Big) \rhd a.
\end{equation}

The exponential series $exp(a) := \sum\limits_{n \geq 0}^{}{\frac{a ^n}{n !}}$ belongs to $\widehat{S\big(\Cal{P\!L}\big)}$, the completion of the symmetric algebra over $\Cal{P\!L}(a)$, endowed with its usual commutative algebra structure. We give in following an important result obtained by F. Chapoton and F. Patras in \cite{CP13}.     

\begin{thm}
The Magnus element $\dot{\Omega}(a)$ in $\widehat{\Cal{P\!L}}(a)$ can be written:
\begin{equation}\label{logarithm for.}
\dot{\Omega}(a) = log^{\ast}\big(exp(a)\big),
\end{equation}
where $\ast$ is the Grossman-Larson product\footnote{ Grossman-Larson algebra is defined in the next section, Paragraph \ref{section of calcualtions}.}. The notation $log^{\ast}$ means that the logarithm is computed with respect to the product $\ast$.  
\end{thm}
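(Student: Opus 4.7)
The plan is to show that $\Omega := \log^{\ast}(\exp(a))$ lies in the primitive subspace $\widehat{\Cal{P\!L}}(a)$ of $\widehat{S(\Cal{P\!L})}$, and that it satisfies the same fixed-point equation as the pre-Lie Magnus expansion $\dot{\Omega}(a)$; uniqueness of that fixed point then identifies the two elements.

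First, I would equip $S\bigl(\Cal{P\!L}(a)\bigr)$ with the Hopf algebra structure $(\ast,\Delta)$, where $\ast$ is the Grossman--Larson product and $\Delta$ is the standard cocommutative coproduct for which every element of $\Cal{P\!L}(a)$ is primitive. By the theorem of Oudom and Guin this Hopf algebra is isomorphic to the universal enveloping algebra of the Lie algebra obtained from $(\Cal{P\!L},\rhd)$ by antisymmetrization, and in particular $\Delta$ is an algebra morphism for $\ast$. Since $a$ is primitive, the binomial formula gives $\Delta(\exp(a))=\exp(a)\otimes\exp(a)$, so $\exp(a)$ is group-like. Applying $\log^{\ast}$ to a group-like element in a graded connected Hopf algebra produces a primitive element, hence $\Omega \in \widehat{\Cal{P\!L}}(a)$.

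Next, the heart of the argument is the key identity, valid for any primitive $\Omega \in \widehat{\Cal{P\!L}}$,
\begin{equation*}
\exp^{\ast}(\Omega) \;=\; \exp\!\bigl(G(L_{\rhd}[\Omega])(\Omega)\bigr), \qquad G(x):=\frac{e^x-1}{x}=\sum_{n\geq 0}\frac{x^n}{(n+1)!},
\end{equation*}
in which the outer $\exp$ on the right uses the commutative product on $S(\Cal{P\!L})$. My approach to this identity is to exploit the Oudom--Guin recursive description $X\ast Y = \sum_{(X)} X_{(1)}\cdot(X_{(2)}\rhd Y)$, where $\rhd$ is extended from $\Cal{P\!L}$ to $S(\Cal{P\!L})$ as a derivation in the right argument, and then to induct on the total degree in $a$. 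A sanity check at degrees $\leq 3$ recovers exactly the coefficients already appearing in \eqref{pre-Lie formula B}.

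Finally, combining the key identity with $\exp^{\ast}(\Omega)=\exp(a)$ and the bijectivity of the commutative exponential on the primitive part yields $G(L_{\rhd}[\Omega])(\Omega) = a$. Since $G(x)\cdot F(x)=1$ with $F(x)=x/(e^x-1)=\sum_{n\geq 0}\frac{B_n}{n!}x^n$, the operators $G(L_{\rhd}[\Omega])$ and $F(L_{\rhd}[\Omega])$ are mutually inverse, and applying $F(L_{\rhd}[\Omega])$ to both sides gives
\begin{equation*}
\Omega \;=\; \frac{L_{\rhd}[\Omega]}{e^{L_{\rhd}[\Omega]}-1}(a),
\end{equation*}
which is precisely the pre-Lie Magnus fixed-point equation \eqref{pre-Lie formula A} satisfied by $\dot{\Omega}(a)$. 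By graded induction that fixed point is unique in $\widehat{\Cal{P\!L}}(a)$, so $\Omega=\dot{\Omega}(a)$. The hard part is the second step: although the identity is straightforward to verify in any fixed degree, a closed-form proof requires careful bookkeeping of how the Oudom--Guin extension of $\rhd$ distributes over the commutative product on $S(\Cal{P\!L})$, and this is the essential technical content needed to deduce the Chapoton--Patras formula.
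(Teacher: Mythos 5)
Your architecture is sound, and it is worth saying up front that the paper itself offers no argument here: its ``proof'' is the citation to \cite[Theorem 4]{CP13}, so the only meaningful comparison is with the Chapoton--Patras proof, which your proposal essentially reconstructs. The three steps are the right ones: (i) $\exp(a)$ is group-like for the coproduct of the Oudom--Guin Hopf algebra $\big(\widehat{S(\Cal{P\!L})},\ast,\Delta\big)$, whose primitive part is exactly $\widehat{\Cal{P\!L}}(a)$, so $\log^{\ast}(\exp(a))$ lands where it should; (ii) the comparison of the two exponentials $\exp^{\ast}(\Omega)=\exp\big(G(L_{\rhd}[\Omega])(\Omega)\big)$ with $G(x)=(e^{x}-1)/x$; (iii) injectivity of the commutative $\exp$ on the augmentation ideal gives $G(L_{\rhd}[\Omega])(\Omega)=a$, and composing with $F(L_{\rhd}[\Omega])$, $F(x)=x/(e^{x}-1)$, recovers the fixed-point equation \eqref{pre-Lie formula A}, whose solution is unique by induction on the grading. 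Steps (i) and (iii) are complete as written, and the inversion in (iii) is legitimate because $F(x)G(x)=1$ as formal series and $L_{\rhd}[\Omega]$ is a single fixed operator.

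The one genuine gap is the one you name yourself: the key identity in step (ii) is asserted, not proved, and it carries essentially all of the content of the theorem (it is the analogue of \cite[Theorem 2]{CP13}, the statement that the Grossman--Larson exponential and the commutative exponential differ by the inverse Magnus map $W=G(L_{\rhd}[\cdot])$). A check in degrees $\leq 3$ is evidence, not a proof. The induction you sketch does work, but to run it you need more than the formula $X\ast Y=\sum X_{(1)}\cdot(X_{(2)}\rhd Y)$: you must also use the two Oudom--Guin compatibilities, namely that the extended $\rhd$ is a coalgebra map in the appropriate sense (so that $\Delta$ is multiplicative for $\ast$) and that $M\rhd(N\cdot P)=\sum (M_{(1)}\rhd N)\cdot(M_{(2)}\rhd P)$, which is what lets you differentiate $\exp^{\ast}(t\Omega)$ in $t$ and match it against the derivative of $\exp\big(G(tL_{\rhd}[\Omega])(t\Omega)\big)$ degree by degree \cite{GO08}. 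Until that lemma is written out, the proposal is a correct and well-organized reduction of the theorem to the exponential-comparison identity rather than a complete proof.
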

\begin{proof}
See \cite[Theorem 4]{CP13}.
\end{proof}
     
\section{An approach for Magnus expansion terms using rooted trees}\label{AMERT}
A. Iserles and S. P. No\!\!/\;\!rsett have developed an alternative approach, using planar binary rooted trees to encode the classical Magnus expansion terms \cite{IN99}. K. Ebrahimi-Fard and D. Manchon, in their joint work \cite{K.M09}, used planar rooted trees to represent the pre-Lie Magnus expansion. This encoding of expansion terms, using planar binary rooted trees, is defined as:
$$x  \rightsquigarrow \treel,\;\; x \rhd x  \rightsquigarrow \treesmall\;.$$

Hence, the pre-Lie Magnus expansion, described in \eqref{pre-Lie formula B}, can be denoted in the shorthand as:

\begin{equation}
\dot{\Omega}(\treel) = \treel - \frac{1}{2}\;\treesmall + \frac{1}{4}\;\treeB + \frac{1}{12}\;\treeA - \left( \frac{1}{8}\;\treeE + \frac{1}{24}\;\left( \treeF + \treeG + \treeD\;\right)\right) + \cdots
\end{equation}
and the reduction in expansion terms at the fourth order can be described as:
$$\dot{\Omega}_4(\treel) = -\,\frac{1}{6}\;\treeE - \frac{1}{12}\;\treeG\;,$$
thanks to the pre-Lie identity:
$$ \treeD - \treeE = \treeG - \treeF\;.$$

The approach proposed by K. Ebrahimi-Fard and D. Manchon is more in the line of non-commutative Butcher series\footnote{For more details about Butcher series see \cite[Section 4.1]{CB00}.}. In following, we shall review the joint work of K. Ebrahimi-Fard and D. Manchon, published in \cite{K.M09}, on finding an explicit formula, in planar rooted tree version, for pre-Lie Magnus expansion. Let $\sigma = B_+(\sigma_1 \cdots \sigma_k)$\label{op1} be any (undecorated) planar rooted tree, denote $f(v)$, for $v \in V(\sigma)$, by the number of outgoing edges, i.e. the fertility of the vertex $v$ of $\sigma$. The degree $|\sigma|$ of a tree here is given by the number of its vertices. Define the linear map $\gamma: \Cal{T}_{\!\!pl} \rightarrow K$\label{K12} as:
\begin{equation}\label{definition of alph.}
\gamma(\sigma):= \frac{B_k}{k!} \prod\limits_{i=1}^{k}{\gamma(\sigma_i)} = \prod_{v\,\in V(\sigma)}{\frac{B_{f(v)}}{f(v)!}}\,, 
\end{equation}
where $B_k$ are Bernoulli numbers.
\begin{lem}
For any planar rooted tree $\tau$, such that there exists $v \in V(\tau)$ of fertility $2n +1, n > 0$, we have $\gamma(\tau)=0$. 
\end{lem}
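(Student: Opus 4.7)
The plan is to exploit the product formula in the definition \eqref{definition of alph.} directly, together with the classical vanishing of odd-indexed Bernoulli numbers beyond the first.

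First, I would recall that the Bernoulli numbers satisfy $B_{2n+1}=0$ for every integer $n>0$. This is a standard consequence of the generating-function identity
\begin{equation*}
\frac{z}{e^{z}-1} + \frac{z}{2} \;=\; \frac{z}{2}\cdot\frac{e^{z}+1}{e^{z}-1},
\end{equation*}
whose right-hand side is an even function of $z$, so all odd Taylor coefficients vanish apart from the linear term $B_{1}z = -z/2$ already subtracted on the left. I would just cite this fact rather than reprove it.

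Then I would apply the definition in \eqref{definition of alph.}: for any planar rooted tree $\tau$,
\begin{equation*}
\gamma(\tau) \;=\; \prod_{v\in V(\tau)} \frac{B_{f(v)}}{f(v)!}.
\end{equation*}
By hypothesis there exists a vertex $v_{0}\in V(\tau)$ with $f(v_{0})=2n+1$ for some $n>0$. The corresponding factor in the product is $B_{2n+1}/(2n+1)! = 0$, hence the entire product vanishes and $\gamma(\tau)=0$.

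There is no real obstacle here; the statement is a one-line corollary of the definition combined with the standard vanishing $B_{2n+1}=0$ for $n>0$. The only thing worth flagging is the exclusion $n=0$, which corresponds to fertility $1$ and Bernoulli number $B_{1}=-\tfrac{1}{2}\neq 0$, so the hypothesis $n>0$ is essential and consistent with the pre-Lie Magnus expansion actually producing nontrivial coefficients on trees with vertices of fertility one.
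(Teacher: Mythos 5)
Your argument is correct and is essentially identical to the paper's own proof, which likewise notes that the claim is immediate from the product formula in \eqref{definition of alph.} together with the vanishing $B_{2n+1}=0$ for $n>0$. The extra remarks on the parity of the Bernoulli generating function and on the necessity of the hypothesis $n>0$ are sound but not needed.
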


\begin{proof}
It is immediate from the definition of $\gamma$ in \eqref{definition of alph.}, and the fact that $B_{2n+1} = 0$, for all $n > 0$.
\end{proof} 
Define a subspace $\Cal{T}^{e1}_{\!\!pl}$\label{e11} of all planar rooted trees excluding trees with at least one vertex of fertility $2n + 1$, with $n > 0$. The tree functional $F$ is defined recursively by:
\begin{equation}\label{def. of F}
F[\racine](x) = x, \hbox{ and } F[\tau](x):= r_{\rhd}^{(k+1)}\big(F[\tau_1](x), \ldots, F[\tau_k](x), x\big),
\end{equation} 
where $ \tau= B_+(\tau_1 \cdots \tau_k)$, and
$$ r_{\rhd}^{(k+1)}\big(F[\tau_1](x), \ldots, F[\tau_k](x), x\big):= F[\tau_1](x) \rhd ( F[\tau_2](x) \rhd (\cdots \rhd (F[\tau_k](x) \rhd x )\cdots)).$$
\begin{thm}\label{NRT}
The pre-Lie Magnus expansion can be written:
\begin{equation}\label{KM formula}
\dot{\Omega}(x) = \sum_{ \tau \in T^{e1}_{\!\!pl}}{\gamma(\tau) F[\tau](x)}.
\end{equation}
\end{thm}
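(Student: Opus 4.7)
The plan is to proceed by induction on the degree $n$ of the planar rooted trees, exploiting the one-to-one correspondence between planar rooted trees of root fertility $m$ and ordered $m$-tuples of planar rooted trees via the grafting operator $B_+$. The starting point is the recursion that defines $\dot{\Omega}$, namely
\begin{equation*}
\dot{\Omega}(x) = \sum_{m \geq 0} \frac{B_m}{m!}\, L_{\rhd}[\dot{\Omega}]^{m}(x) = \sum_{m \geq 0} \frac{B_m}{m!}\, \dot{\Omega} \rhd \bigl(\dot{\Omega} \rhd (\cdots \rhd (\dot{\Omega} \rhd x) \cdots )\bigr),
\end{equation*}
with $m$ occurrences of $\dot{\Omega}$ in the $m$-th summand. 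The base case is immediate: at degree one, $\dot{\Omega}_1(x) = x$, which coincides with $\gamma(\racine)\,F[\racine](x)$ since $\gamma(\racine) = B_0/0! = 1$ and $F[\racine](x) = x$ by \eqref{def. of F}.

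For the inductive step, assume the formula \eqref{KM formula} has been established for all degrees $< n$. Substitute the ansatz $\dot{\Omega} = \sum_{\tau} \gamma(\tau) F[\tau](x)$ into the right-hand side of the defining recursion; the degree-$n$ contribution reads
\begin{equation*}
\dot{\Omega}_n(x) = \sum_{m \geq 0} \frac{B_m}{m!} \sum_{\substack{\tau_1,\ldots,\tau_m \\ |\tau_1|+\cdots+|\tau_m|=n-1}} \gamma(\tau_1)\cdots\gamma(\tau_m)\, F[\tau_1] \rhd \bigl(F[\tau_2]\rhd(\cdots \rhd (F[\tau_m] \rhd x)\cdots)\bigr).
\end{equation*}
By the recursive definition of $F$, the iterated product $F[\tau_1] \rhd(\cdots\rhd (F[\tau_m]\rhd x)\cdots)$ equals $F[\tau](x)$ where $\tau = B_+(\tau_1\cdots\tau_m)$; because we are working with \emph{planar} trees, the ordered $m$-tuple $(\tau_1,\ldots,\tau_m)$ is in bijection with a single tree $\tau$ of root fertility $m$. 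Regrouping the sum by the resulting tree $\tau$ of degree $n$ yields
\begin{equation*}
\dot{\Omega}_n(x) = \sum_{\substack{\tau : |\tau|=n \\ \tau = B_+(\tau_1\cdots\tau_m)}} \frac{B_m}{m!}\prod_{i=1}^{m} \gamma(\tau_i)\, F[\tau](x) = \sum_{\tau : |\tau|=n} \gamma(\tau)\, F[\tau](x),
\end{equation*}
where the last equality is precisely the recursive definition \eqref{definition of alph.} of $\gamma$.

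Finally, the restriction of the summation index to $\Cal{T}^{e1}_{\!\!pl}$ is automatic: by the preceding lemma, $\gamma(\tau) = 0$ whenever $\tau$ possesses a vertex of fertility $2n+1$ with $n > 0$, since the factor $B_{2n+1}/(2n+1)!$ vanishes. Trees outside $\Cal{T}^{e1}_{\!\!pl}$ therefore contribute zero and may be dropped from the sum. The main delicate point is the bijection between root-fertility-$m$ planar trees and ordered $m$-tuples of planar trees: this is why planarity (i.e.\ an ordering of the children) is essential for the coefficients to combine cleanly into $\gamma$ without extra symmetry factors, and also why the argument is notably simpler here than in its non-planar (Butcher series) analogue.
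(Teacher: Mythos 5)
Your proof is correct. Note that the paper itself does not prove Theorem~\ref{NRT}: it simply defers to \cite[Theorem 20]{K.M09}, so there is no in-paper argument to compare against. Your induction on the degree is the natural self-contained argument and, as far as I can tell, essentially reproduces the one in the cited reference: the key points are all present — the fixed-point equation \eqref{pre-Lie formula A} determines $\dot{\Omega}_n$ from the components of degree $<n$ (the $m\geq 1$ summands raise the degree by at least one, and the $m=0$ summand contributes only in degree one), the iterated product $F[\tau_1]\rhd(\cdots\rhd(F[\tau_m]\rhd x)\cdots)$ is by definition $F[B_+(\tau_1\cdots\tau_m)]$, the map $B_+$ is a bijection from ordered $m$-tuples of planar trees onto planar trees with root fertility $m$, and the coefficients recombine into $\gamma$ exactly by its recursive definition \eqref{definition of alph.}. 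Your closing remarks are also on point: the restriction to $\Cal{T}^{e1}_{\!\!pl}$ costs nothing because $\gamma$ kills the other trees, and planarity is precisely what lets the ordered tuples be absorbed into single trees without symmetry factors (in the non-planar setting one would have to divide by tree automorphisms). The only cosmetic improvement I would suggest is to state explicitly at the outset that the fixed-point equation has a unique solution in $\lambda\Cal{P\!L}[[\lambda]]$ determined degree by degree, so that verifying the recursion for the tree series suffices; your inductive phrasing already contains this, but making it explicit removes any appearance of circularity in "substituting the ansatz".
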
 

\begin{proof}
See \cite[Theorem 20]{K.M09}.
\end{proof} 

For $ n \geq 1$, the numbers of trees in $T^{e1,\,n}_{\!pl}$, the subset of all planar rooted trees with "$1 \hbox{ or even fertility }$" of degree $n$, is given by the sequence "A049130" in \cite{S}. Here, we give few of first terms of this sequence: $1, 1, 2, 4, 10, 26, 73, 211, 630, \ldots$. \\

We give here some examples of the formula of pre-Lie Magnus expansion described in \eqref{KM formula}, as follows:

\begin{align*}
\dot{\Omega}(x) &= \gamma(\racine) F[\racine](x) + \gamma(\arbrea)F[\arbrea](x) + \gamma(\arbreba)F[\arbreba](x) + \gamma(\arbrebb)F[\arbrebb](x) + \Cal{O}(4)&\\ 
& = x - \frac{1}{2} r^{(2)}_{\rhd}\big(F[\racine](x), x\big) + \frac{1}{4} r^{(2)}_{\rhd}\big(F[\arbrea](x), x\big) + \frac{1}{12} r^{(3)}_{\rhd}\big(F[\racine](x), F[\racine](x), x\big) + \Cal{O}(4).&
\end{align*}
At order four, we have:

\begin{align*}
\dot{\Omega}_4(x) &= \gamma(\arbreca) F[\arbreca](x) + \gamma(\arbrecb) F[\arbrecb](x) + \gamma(\arbrecc) F[\arbrecc](x) + \gamma(\arbreccc) F[\arbreccc](x)&\\
&= - \left( \frac{1}{8} r^{(2)}_{\rhd}\big(F[\arbreba](x), x\big) + \frac{1}{24} r^{(2)}_{\rhd}\big(F[\arbrebb](x), x\big) + \frac{1}{24} \big( r^{(3)}_{\rhd}\big(F[\arbrea](x), F[\racine](x), x\big) + r^{(3)}_{\rhd}\big(F[\racine](x), F[\arbrea](x), x\big) \big) \right),   
\end{align*}
but, thanks to pre-Lie identity, we have:
$$ r^{(2)}_{\rhd}\big(F[\arbreba](x), x\big) - r^{(3)}_{\rhd}\big(F[\arbrea](x), F[\racine](x), x\big) = r^{(2)}_{\rhd}\big(F[\arbrebb](x), x\big) - r^{(3)}_{\rhd}\big(F[\racine](x), F[\arbrea](x), x\big) \big),$$
then the formula $\dot{\Omega}_4(x)$ can be reduced into two terms, as follows:
\begin{align}\label{order four}
\dot{\Omega}_4(x) = & - \frac{1}{6} r^{(2)}_{\rhd}\big(F[\arbreba](x), x\big) - \frac{1}{12} r^{(3)}_{\rhd}\big(F[\racine](x), F[\arbrea](x), x\big)&    
\end{align} 
\begin{align*}
\!\!\!\!\!\!\!\!& = - \frac{1}{6} F[\arbreca](x)  - \frac{1}{12} F[\arbreccc](x).&
\end{align*}

Eight trees out of ten appear in the pre-Lie Magnus expansion at order five, due to the recursive nature of this expansion, which are\label{om2}:
$$ \arbreda\;, \arbredcc , \arbrede, \arbredee, \arbredf, \arbredz,\!\!\arbredzz , \arbredh. $$
Hence,

\begin{align*}
\dot{\Omega}_5(x) = & \gamma(\arbreda) F[\arbreda](x) + \gamma(\arbredcc) F[\arbredcc](x) + \gamma(\arbrede) F[\arbrede](x) + \gamma(\arbredee) F[\arbredee](x) + \gamma(\arbredf) F[\arbredf](x) & \\
& + \gamma(\arbredz) F[\arbredz](x) + \gamma(\!\arbredzz) F[\!\arbredzz](x) + \gamma(\arbredh) F[\arbredh](x).& 
\end{align*}
Using the pre-Lie identity as:
$$ F[\arbredcc](x) - F[\arbredee](x) = F[\arbreda](x) - F[\arbrede](x),$$
we obtain a reduced formula of pre-Lie Magnus expansion at order five, with seven terms described as:

\begin{align}\label{order five}
\dot{\Omega}_5(x) = & \frac{5}{48} F[\arbreda](x) + \frac{1}{48} F[\arbredcc](x) + \frac{1}{24} F[\arbredee](x) + \frac{1}{48} F[\arbredf](x) &
\end{align}
\begin{align*}
\;\;\;\;\;\;\;& + \frac{1}{144} \left( F[\arbredz](x) + F[\!\arbredzz](x) \right) - \frac{1}{120} F[\arbredh](x).& 
\end{align*} 

The reduced formulas at orders four and five, described in \eqref{order four}, \eqref{order five} respectively, are considered as best (or optimal) formulas for the pre-Lie Magnus expansion at these orders. 

\subsection{Some calculations in pre-Lie Magnus expansion}\label{section of calcualtions}\mbox{}

Let us consider the free pre-Lie algebra $\Cal{P\!L} = \Cal{T}$\label{cnpnd1} with one generator $\racine$, together with the pre-Lie grafting $\to$\label{to4}. Then, we can represent pre-Lie Magnus expansion in terms of rooted trees as in the following. We need first to introduce the following result.

\begin{lem}\label{rel. between F and Psi}
For any planar rooted tree $\tau$, we have:
$$ F[\tau](\racine) = \overline{\Psi}(\tau),$$
where $F$ is the function described in \eqref{def. of F}, and $\overline{\Psi}$ is defined in our work \cite[Subsection 2.2]{AM2014}.
\end{lem}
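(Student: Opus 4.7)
I would prove the identity $F[\tau](\racine) = \overline{\Psi}(\tau)$ by induction on the number of vertices of the planar rooted tree $\tau$, exploiting the fact that both sides are built by the same recursive "left-combed" grafting pattern on the root.

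The base case is $\tau = \racine$: by definition $F[\racine](\racine) = \racine$, and the construction of $\overline{\Psi}$ in \cite[Subsection 2.2]{AM2014} sends the one-vertex tree to itself, so equality holds. For the inductive step, write $\tau = B_+(\tau_1 \cdots \tau_k)$, where each $\tau_i$ has strictly fewer vertices than $\tau$. Using the recursive definition \eqref{def. of F}, we have
\begin{equation*}
F[\tau](\racine) = F[\tau_1](\racine) \rhd \Big( F[\tau_2](\racine) \rhd \big(\cdots \rhd (F[\tau_k](\racine) \rhd \racine)\cdots \big)\Big),
\end{equation*}
and by the induction hypothesis applied to each $\tau_i$ this becomes
\begin{equation*}
F[\tau](\racine) = \overline{\Psi}(\tau_1) \rhd \Big( \overline{\Psi}(\tau_2) \rhd \big(\cdots \rhd (\overline{\Psi}(\tau_k) \rhd \racine)\cdots \big)\Big).
\end{equation*}
Here the pre-Lie product $\rhd$ on the free pre-Lie algebra $\Cal{T}$ with one generator $\racine$ is exactly the grafting operation $\to$ on (non-planar) rooted trees.

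The final step is to recognize the right-hand side as $\overline{\Psi}(\tau)$. The essential input from \cite[Subsection 2.2]{AM2014} is the recursive identity
\begin{equation*}
\overline{\Psi}\big(B_+(\tau_1 \cdots \tau_k)\big) = \overline{\Psi}(\tau_1) \to \Big( \overline{\Psi}(\tau_2) \to \big( \cdots \to ( \overline{\Psi}(\tau_k) \to \racine )\cdots \big)\Big),
\end{equation*}
which is precisely the left-combed iterated grafting pattern produced by $F$. Matching the two expressions closes the induction.

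The only real obstacle is ensuring that the recursive formula for $\overline{\Psi}$ quoted above matches the author's definition in the cited subsection; once this is verified (or recalled explicitly), the induction is entirely formal. I would therefore begin the proof by briefly recalling the defining recursion of $\overline{\Psi}$, noting the identification of $\rhd$ with grafting on rooted trees, and then carry out the two-line induction above.
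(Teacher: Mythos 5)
Your proposal is correct and follows essentially the same route as the paper: induction (the paper inducts on the number of branches rather than vertices, which changes nothing), unfolding the recursion \eqref{def. of F}, applying the induction hypothesis to the branches, and identifying the resulting left-combed grafting expression with $\overline{\Psi}(\tau)$. The one step you flag as "to be verified" is exactly what the paper supplies, via the identity $\tau = B_+(\tau_1\cdots\tau_k) = \tau_1\lbutcher(\tau_2\lbutcher(\cdots\lbutcher(\tau_k\lbutcher\racine)\cdots))$ combined with the compatibility of $\overline{\Psi}$ with the left Butcher product.
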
 

\begin{proof}
Let $\tau$ be any planar rooted tree with $k$ branches, then it can be written in a unique way as $\tau = B_+(\tau_1 \ldots \tau_k)$. Using the induction hypothesis on the number $k$ of branches, we have:

$$ F[\racine](\racine) = \overline{\Psi}(\racine) = \racine\,.$$ 

\noindent Suppose that the hypothesis of this Lemma is true for all planar rooted trees $\tau'$ with $n-1$ branches, for all $n \leq k$, i.e. $F[\tau'](\racine) = \overline{\Psi}(\tau')$, hence:

\begin{align*} 
F[\tau](\racine) &= r_{\to}^{(k+1)}\big(F[\tau_1](\racine), \ldots, F[\tau_k](\racine), \racine\big) \hskip 8mm \hbox{ (from definition of $F$ in \eqref{def. of F})}&\\
& = F[\tau_1](\racine) \to \Big( F[\tau_2](\racine) \to \big( \cdots \to \big( F[\tau_k](\racine) \to \racine \big) \cdots\big)\Big)&\\
& = \overline{\Psi}(\tau_1) \to \Big( \overline{\Psi}(\tau_2) \to \big( \cdots \to \big( \overline{\Psi}(\tau_k) \to \racine \big) \cdots\big)\Big)\hskip 8mm \hbox{ (by the hypothesis above)}&\\
& = \overline{\Psi}\big(\tau_1\lbutcher (\tau_2 \lbutcher ( \cdots \lbutcher (\tau_k \lbutcher \racine ) \cdots))\big)\hskip 8mm \hbox{ (from definition of $\overline{\Psi}$)}&\\
& = \overline{\Psi}(\tau)\hskip 8mm \hbox{ (since $\tau = B_+(\tau_1 \ldots \tau_k) = \tau_1\lbutcher (\tau_2 \lbutcher ( \cdots \lbutcher (\tau_k \lbutcher \racine ) \cdots))$ ) }.&
\end{align*}
This proves the Lemma.
\end{proof} 

\begin{prop}
The pre-Lie Magnus expansion can be written as:
\begin{equation}\label{formula D}
\dot{\Omega}(\racine) = \sum_{{\tau \in T^{e1}_{_{\!\!pl}}}\atop{s\,\in T}}{\gamma(\tau)\,\alpha(s, \tau) s,}
\end{equation}
where $\alpha(s, \tau)$\label{e12} are the coefficients described in \cite[Theorem 4]{AM2014}, and $\gamma$ is the map defined above in \eqref{definition of alph.}.
\end{prop}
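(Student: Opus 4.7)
The proof plan is to combine Theorem \ref{NRT} with Lemma \ref{rel. between F and Psi} and then expand via the coefficients $\alpha(s,\tau)$.

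First, I would specialize Theorem \ref{NRT} to the free pre-Lie algebra with one generator, taking $x = \racine$. This gives
\[
\dot{\Omega}(\racine) \;=\; \sum_{\tau\,\in\,T^{e1}_{\!\!pl}} \gamma(\tau)\,F[\tau](\racine).
\]
Next, I would invoke Lemma \ref{rel. between F and Psi}, which identifies the tree functional $F[\tau](\racine)$ with $\overline{\Psi}(\tau)$. Substituting yields
\[
\dot{\Omega}(\racine) \;=\; \sum_{\tau\,\in\,T^{e1}_{\!\!pl}} \gamma(\tau)\,\overline{\Psi}(\tau).
\]

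The remaining step is to expand each $\overline{\Psi}(\tau)$ in the monomial basis of (non-planar) rooted trees. By the cited Theorem 4 of \cite{AM2014}, the coefficients of this expansion are, by definition, the $\alpha(s,\tau)$, so that
\[
\overline{\Psi}(\tau) \;=\; \sum_{s\,\in\,T} \alpha(s,\tau)\,s.
\]
Plugging this into the previous display and exchanging the two finite sums (at each fixed degree both index sets are finite, so no convergence issue arises within the completion) gives exactly
\[
\dot{\Omega}(\racine) \;=\; \sum_{{\tau\,\in\,T^{e1}_{\!\!pl}}\atop{s\,\in\,T}} \gamma(\tau)\,\alpha(s,\tau)\,s,
\]
as required.

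There is essentially no obstacle here: the proposition is a direct chaining of three facts (Theorem \ref{NRT}, Lemma \ref{rel. between F and Psi}, and the definition/expansion of $\overline{\Psi}$ from \cite{AM2014}). The only minor point to be careful about is the interpretation of the double sum: strictly speaking, for each fixed tree $s \in T$ of degree $n$, only finitely many planar trees $\tau \in T^{e1}_{\!\!pl}$ contribute (those of degree $n$, since $\overline{\Psi}$ preserves degree), so the coefficient of $s$ in $\dot{\Omega}(\racine)$ is a well-defined finite sum $\sum_{\tau}\gamma(\tau)\alpha(s,\tau)$, and the whole expression lives in the graded completion of $\Cal{T}$.
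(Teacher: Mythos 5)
Your argument is correct and is exactly the paper's own proof: it chains Theorem \ref{NRT}, Lemma \ref{rel. between F and Psi}, and the expansion $\overline{\Psi}(\tau)=\sum_{s\in T}\alpha(s,\tau)\,s$ from \cite[Theorem 4]{AM2014}. Your added remark that the coefficient of each fixed $s$ is a finite sum (since $\overline{\Psi}$ preserves degree) is a harmless and welcome precision the paper leaves implicit.
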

\begin{proof}
 Immediate from Theorem \ref{NRT} and Lemma \ref{rel. between F and Psi}, and using the formula:
$$\overline{\Psi}(\tau) = \sum_{s\,\in T}{\alpha(s, \tau) s},$$ 
that is introduced by \cite[Theorem 4]{AM2014}. 
\end{proof}
Now for any $\tau \in T^{e1}_{\!pl}$, let  $ e_{\tau} := \overline{\Psi}(\tau)$. The planar rooted tree $\tau$ is uniquely written as a monomial expression $m(\racine, \lbutcher)$ involving  the root and the left Butcher product. Then $\overline{\Psi}(\tau)$ is $m(\racine, \to)$\label{to5}, i.e. the same monomial expression where the left Butcher product is replaced by the pre-Lie grafting of (non-planar) rooted trees. Here, we display optimal (with respect to the number of terms) formulas of pre-Lie Magnus expansion up to order seven:\\
\begin{align*}
\dot{\Omega}_{1}(\!\racine)&= \racine\\ 
\dot{\Omega}_{2}(\!\racine)&= B_{1}\ e_{_{\arbrea}}\\ 
\dot{\Omega}_{3}(\!\racine)&= B^2_{1}\ e_{_{\arbreba}} + \frac{B_2}{2!}\ e_{_{\arbrebb}}\\ 
\dot{\Omega}_{4}(\!\racine)&= \frac {B_1}{3}\ e_{_{\arbreca}} + B_1B_2\ e_{_{\!\!\!\!\!\!\arbreccc}}\\
\dot{\Omega}_{5}(\!\racine)&= -B_1\frac{B_2}{2!} \frac{5}{2}\ e_{_{\arbreda}}-B_1\frac{B_2}{2!} \frac{1}{2}\ e_{_{\!\!\!\!\!\!\arbredcc}} + B_{1}^{2}B_2\ e_{_{\!\!\!\!\!\!\arbredee}} + B_{1}^{2}\frac{B_2}{2!}\ e_{_{\arbredf}} + \frac{B_{2}^{2}}{2!2!}\ (e_{_{\arbredz}}+e_{_{\!\!\!\!\!\!\arbredzz}}\ ) + \frac{B_4}{4!}\ e_{_{\arbredh}}\\\\ 
\dot{\Omega}_{6}(\!\racine)&= -\frac{11}{144}\ e_{_{\arbreea}}-\frac{5}{288}\ e_{_{\!\!\!\!\!\!\!\arbreeb}} - \frac{1}{288}\ (e_{_{\arbreec}}+e_{_{\!\!\!\!\!\!\arbreecc}}\ ) + B_1\frac{B_4}{4!}\ e_{_{\arbreed}}-\frac{1}{36}\ e_{_{_{_{_{_{_{_{_{\!\!\!\!\!\!\!\arbreee}}}}}}}}}- \frac{1}{144} (e_{_{_{_{_{_{_{_{\!\!\!\!\!\!\arbreef}}}}}}}} + e_{_{_{_{_{_{_{_{\!\!\!\!\!\!\!\!\!\!\!\!\arbreeff}}}}}}}}\ )- \frac{1}{48}\ e_{_{_{_{_{_{_{_{_{\!\!\!\!\!\!\arbreeg}}}}}}}}} \\
& \ \ \ \ - \frac{1}{288} ( e_{_{\arbreeh}} + e_{_{\!\!\!\arbreehh}}\ ) + B_1\frac{B_4}{4!}\ (e_{_{_{_{_{_{_{\!\arbreei}}}}}}} + e_{_{_{_{_{_{_{_{\!\!\!\arbreeii}}}}}}}} + e_{_{_{_{_{_{_{_{\!\!\!\!\!\!\arbreeiii}}}}}}}} + e_{_{_{_{_{_{_{\!\!\!\!\!\!\!\!\!\arbreeiiii}}}}}}})&\\\\
\dot{\Omega}_{7}(\!\racine)&= \frac{31}{576}\ e_{_{\arbrefa}} + \frac{1}{576}\ e_{_{\!\!\!\!\!\!\!\arbrefb}} + \frac{1}{576}\ (e_{_{\arbrefc}}+e_{_{\!\!\!\!\!\!\arbrefcc}}\ ) + B^{2}_{1}\frac{B_4}{4!}\ e_{_{\arbrefd}} + \frac{7}{576}\ e_{_{_{_{_{_{_{_{_{\!\!\!\!\!\!\!\arbrefe}}}}}}}}} + \frac{1}{288} (e_{_{_{_{_{_{_{_{\!\!\!\!\!\!\arbreff}}}}}}}} + e_{_{_{_{_{_{_{_{\!\!\!\!\!\!\!\!\!\!\!\!\arbrefff}}}}}}}}\ ) + \frac{1}{288}\ e_{_{_{_{_{_{_{_{_{\!\!\!\!\!\!\arbrefg}}}}}}}}} \\\\
& \ \ \ \ + \frac{1}{576} ( e_{_{\arbrefh}} + e_{_{\!\!\!\arbrefhh}}\ ) + B^{2}_{1}\frac{B_4}{4!}\ (e_{_{_{_{_{_{_{\!\arbrefi}}}}}}} + e_{_{_{_{_{_{_{_{\!\!\!\arbrefii}}}}}}}} + e_{_{_{_{_{_{_{_{\!\!\!\!\!\!\arbrefiii}}}}}}}} + e_{_{_{_{_{_{_{\!\!\!\!\!\!\!\!\!\arbrefiiii}}}}}}}) + \frac{5}{288} \ e_{_{\arbrefj}} + \frac{1}{576}\ ( e_{_{\!\!\!\!\!\!\arbrefk}} + e_{_{\!\!\!\!\!\!\arbrefkk}} ) + \frac{1}{288}\ ( e_{_{\!\!\!\!\!\!\arbrefl}} + e_{_{\!\!\!\!\!\!\arbrefll}} )\\\\
&\ \ \ \  + \frac{1}{576}\ ( e_{_{\arbrefm}} + e_{_{\arbrefmm}} ) + \frac{1}{1728}\ ( e_{_{\arbrefn}} + e_{_{\arbrefnn}} + e_{_{\!\!\!\!\!\!\arbrefnnn}} + e_{_{\!\!\!\!\!\!\arbrefnnnn}}\ ) + \frac{B_2}{2!}\frac{B_4}{4!}\ ( e_{_{\arbrefo}} + e_{_{\arbrefoo}} ) + \frac {1}{72}\ e_{_{\arbrefp}} + \frac{1}{288}\ ( e_{_{_{_{_{_{_{_{_{\!\!\!\!\!\arbrefq}}}}}}}}}\!\!+ \ e_{_{_{_{_{_{_{_{_{\!\!\!\!\!\!\!\!\!\!\!\!\!\arbrefqq}}}}}}}}} ) \\\\
&\ \ \ \  + \frac{1}{192}\ e_{_{\arbrefr}} \ \ + \frac{1}{576}\ ( \ e_{_{_{_{_{_{_{_{_{_{\!\!\!\!\!\!\!\!\!\!\arbrefs}}}}}}}}}} + e_{_{_{_{_{_{_{_{_{_{\!\!\arbrefss}}}}}}}}}} \ ) +\frac{1}{1728}\ e_{_{\arbreft}} \ \ \ + \frac{B_4}{4!} B^{2}_1\ ( \ e_{_{_{_{_{_{_{_{_{_{_{_{_{_{\arbrefv}}}}}}}}}}}}}} + e_{_{_{_{_{_{_{_{_{_{_{_{_{_{_{_{\!\!\!\arbrefvv}}}}}}}}}}}}}}}} + e_{_{_{_{_{_{_{_{_{_{_{_{_{_{_{_{\!\!\!\!\!\!\!\!\arbrefvvv}}}}}}}}}}}}}}}} + e_{_{_{_{_{_{_{_{_{_{_{_{_{_{\!\!\!\!\!\!\!\!\!\!\!\arbrefvvvv}}}}}}}}}}}}}}\!\!)\\\\
&\ \ \ \  + \frac{B_2}{2!}\frac{B_4}{4!}\ (e_{_{_{_{_{_{_{\ \!\arbrefu}}}}}}} + e_{_{_{_{_{_{_{_{\!\arbrefuu}}}}}}}} + e_{_{_{_{_{_{_{_{\!\!\!\!\!\!\arbrefuuu}}}}}}}} + e_{_{_{_{_{_{_{\!\!\!\!\!\!\!\!\!\!\arbrefuuuu}}}}}}}) + B^{2}_1\frac{B_4}{4!}\ (e_{_{_{_{_{_{_{\!\arbrefw}}}}}}} + e_{_{_{_{_{_{_{_{\!\arbrefww}}}}}}}} + e_{_{_{_{_{_{_{_{\!\arbrefwwww}}}}}}}} + e_{_{_{_{_{_{_{_{\!\!\!\!\arbrefwww}}}}}}}} + e_{_{_{_{_{_{_{_{_{\!\!\!\!\arbrefwwwww}}}}}}}}} + e_{_{_{_{_{_{_{_{_{\!\!\!\!\!\!\!\!\!\arbrefwwwwww}}}}}}}}})\\\\
&\ \ \ \  + \frac{B_6}{6!}\ e_{_{\arbrefx}}\,.       
\end{align*}

Due to the recursive nature of the pre-Lie Magnus expansion at the orders calculated above, and thanks to the pre-Lie identity, we observe that many terms $e_{\tau}$ are omitted in this expansion, for example\label{om3}:
\begin{enumerate}
\item At order four, two terms $e_{\tau}$ out of $4$ can be removed in $\dot{\Omega}_{4}(\!\racine)$, namely $e_{_{_{_{_{\!\arbrecb}}}}},\, e_{_{_{_{\!\arbrecc}}}}$\!\!\!\!.
\item At order five, three terms $e_{\tau}$ out of $10$ can be removed in $\dot{\Omega}_{5}(\!\racine)$, the trees of these omitted terms are:
$$\arbredb,\,\arbredc,\,\arbrede.$$   
\item At order six, the terms of $11$ out of $26$ trees can be removed in $\dot{\Omega}_{6}(\!\racine)$, these trees are:

$$\arbreeddhh,\,\arbreddb,\,\arbredddb,\, \arbreedb,\,\arbreddd,\, \arbredde,\,\arbreedf,\!\arbreedff,\,\arbreedg,\,\arbreeddh,\!\arbreedddh\;.$$
\item At order seven, the terms of $23$ out of $73$ trees can be removed in $\dot{\Omega}_{7}(\!\racine)$.  
\end{enumerate}

\begin{rmk}
This reduction of pre-Lie Magnus expansion terms is not unique, for example, at order five, we can write the formula $\dot{\Omega}_5(\racine)$ with another seven reduced terms, as follows:
$$\dot{\Omega}_{5}(\!\racine) = B_1^2 B_2\frac{3}{2} \,e_{_{_{_{\arbredb}}}} + B_1^2 B_2\frac{3}{2} \,e_{_{\arbredc}} + B_{1}^{2}B_2\,e_{_{\arbrede}} + B_{1}^{2}\frac{B_2}{2!}\,e_{_{\arbredf}} + \frac{B_{2}^{2}}{2!2!}\,(e_{_{\arbredz}}+e_{_{\!\!\!\!\!\!\arbredzz}}\,) + \frac{B_4}{4!}\,e_{_{\arbredh}}\,.$$
\end{rmk}

Now, from the joint works of F. Patras with F. Chapoton \cite{CP13}, and with K. Ebrahimi-Fard \cite{EP14}, recall that: a (non-planar) forest $F = t_1 \cdots t_n$ is a commutative product of (non-planar) rooted trees $t_i$. Denote by $w(F)$ by the number of trees in $F$, which is called the weight of a forest $F$, for example $w(t_1 \cdots t_n) = n$. Let $\Cal{F}$ be the linear span of the set of (non-planar) forests, it forms together with the concatenation an associative commutative algebra. Define another product $\ast$ on $\Cal{F}$ by:
\begin{equation}
(t_1 \cdots t_n) \ast (t'_1 \cdots t'_m) := \sum_f{F_0 (F_1 \to t_1) \cdots (F_n \to t_n),}
\end{equation}
where the sum is over all function $f$ from $\{1, \ldots, m\}$ to $\{0, \ldots, n\}$, and $F_i := \!\!\!\prod\limits_{j\,\in f^{-1}(i)}^{}{\!\!\!t'_j}$. The space $\Cal{F}$ forms an associative non-commutative algebra together with the product $\ast$ defined above. This algebra can be provided with a unit element, sometimes it is the empty tree. This unital algebra is called the Grossman-Larson algebra and denoted by $GL := \Cal{F}$. This algebra acts naturally on $\Cal{T}$\label{cnpnd2} by the extending pre-Lie product $\to$\label{to6}. This action can be defined recursively by:

\begin{equation}
\big(F \ast F'\big) \to t:= F' \to \big(F \to t\big),
\end{equation}    
for any $F, F' \in GL$ and $t$ is a (non-planar) rooted tree.
\begin{exam}
For any $t, t_1, t_2$ (non-planar) rooted trees, we have:
$$(t_1 t_2) \to t = t_2 \to (t_1 \to t) - (t_2 \to t_1) \to t.$$
\end{exam} 

The Grossman-Larson algebra $\big(GL, \ast\big)$ is isomorphic to the enveloping algebra of the underlying Lie algebra of $\big(\Cal{T}, \to\big)$. This construction also works for the enveloping algebra of any pre-Lie algebra \cite{GO08}. We refer the reader to the references \cite{GO08}, \cite{CP13}, \cite{EP14}, for more details about this type of algebras and some of its applications. Hence, the formula of pre-Lie Magnus expansion described in \eqref{logarithm for.} can be rewritten:

\begin{equation}\label{best formula}
\dot{\Omega}(\racine) = log^*(e^{\racine}\,) = \sum_{ n > 0}{\frac{(-1)^{n-1}}{n} ( e^{\racine} - 1)^{\ast\, n-1} \to \racine}\,,
\end{equation}
\noindent where $e^{\racine} = exp(\racine) := \sum\limits_{n \geq 0}^{}{\frac{\racine^{\;n}}{n!}}$, for $F = \racine^{\;n}$ is a forest of one-vertex trees with weight $w(F) =n$, and $\ast$ is the Grossman-Larson product. \\

In fact, we study here the undecorated case, with respect to the forests and trees, of the joint works of F. Patras with F. Chapoton, and with K. Ebrahimi-Fard respectively. The decorated version has been studied in \cite{CP13}, \cite{EP14}.\\

Here, we calculate the few first pre-Lie Magnus elements $\dot{\Omega}_n(\racine)$, up to $n=5$, according to the formula \eqref{best formula} above: 

\begin{align*}
\dot{\Omega}_1(\racine) & = \racine\,.&\\
\dot{\Omega}_2(\racine) & = - \frac{1}{2} \arbrea\, = B_{1}\ e_{_{\arbrea}}.&\\
\dot{\Omega}_3(\racine) & = \frac{1}{3} \arbreba + \frac{1}{12} \arbrebb = B^2_{1}\ e_{_{\arbreba}} + \frac{B_2}{2!}\ e_{_{\arbrebb}}\,.&\\
\dot{\Omega}_4(\racine) & = - \frac{1}{4} \arbreca - \frac{1}{12} \arbrecb - \frac{1}{12} \arbrecc =  \frac {B_1}{3}\ e_{_{\arbreca}} + B_1B_2\ e_{_{\!\!\!\!\!\!\arbreccc}}\,.&\\
\dot{\Omega}_5(\racine) & = \frac{1}{5} \arbreda + \frac{3}{40} \arbredb + \frac{1}{10} \arbredc  + \frac{1}{180} \arbredd + \frac{1}{60} \arbredf + \frac{1}{20} \arbrede + \frac{1}{120} \arbredz - \frac{1}{120} \arbredg - \frac{1}{720} \arbredh &\\
 & = -B_1\frac{B_2}{2!} \frac{5}{2}\ e_{_{\arbreda}}-B_1\frac{B_2}{2!} \frac{1}{2}\ e_{_{\!\!\!\!\!\!\arbredcc}} + B_{1}^{2}B_2\ e_{_{\!\!\!\!\!\!\arbredee}} + B_{1}^{2}\frac{B_2}{2!}\ e_{_{\arbredf}} + \frac{B_{2}^{2}}{2!2!}\ (e_{_{\arbredz}}+e_{_{\!\!\!\!\!\!\arbredzz}}\ ) + \frac{B_4}{4!}\ e_{_{\arbredh}}\,.&
\end{align*}
 
\begin{rmk}
We observe that the formula \eqref{best formula} reduces the number of terms in the pre-Lie Magnus expansion the same way as the reduction induced by the pre-Lie identity in formula \eqref{formula D}. In other words, formula \eqref{best formula} can be considered as a best formula for the reduced pre-Lie Magnus expansion. It would be interesting to have an explanation of this striking fact. 
\end{rmk}

\section[Magnus expansion using a monomial basis for free Lie algebra]{A combinatorial approach  for Magnus expansion using a monomial basis for free Lie algebra}\label{section IIII}

A. Iserles and S. P. No\!\!/\;\!rsett, in their joint work \cite{IN99}, studied the differential equation:
\begin{equation}
\dot{y} = a(t) y , t \geq 0, y(0) = y_0 \in G,
\end{equation}
where $G$\label{G2} is a Lie group, $a \in Lip[\mathbb{R}^+ \rightarrow \Cal{L}]$, the set of all Lipschitz functions\footnote{ A real-valued function $f$ is said to be a Lipschitz function if and only if it satisfies: $|f(x)-f(y)| \leq c|x-y|$, for all $x$ and $y$, where $c$ is a constant independent of $x$ and $y$.} from $\mathbb{R}^+$ into $\Cal{L}$, the Lie algebra of $G$. By considering the Magnus expansion, they have demonstrated, using a numerical method, how to write the Magnus expansion in terms of nested commutators $[a(t_1), [a(t_2), [\ldots, [a(t_{k-1}), a(t_k)]\ldots]]]$ of $a(t_i)$ at different nodes $t_i \in [t_0, t_0+h]$, where $h$ is the time step size. They observed that this numerical method requires the evaluation of  a large number of these commutators, which can be accomplished in tractable manner by exploiting the structure of the Lie algebra.\\  

Different strategies have been developed to reduce the total number of commutators, e.g. the use of so-called time symmetry property\footnote{For more details about this property see \cite{BFJ2000}.} and the concept of a graded free Lie algebra \cite{HM99}. In their joint work \cite{BFJ2000}, the three authors S. Blanes, F. Casas, and J. Ros proposed to apply directly the recurrence of Magnus expansion, described in \eqref{magnus formula}, in numerical version to a Taylor series expansion of the matrix $A(t)$. They reproduced the Magnus expansion terms with a linear combination of nested commutators involving $A$.\\

These authors pursued this strategy with a careful analysis of the different terms of the Magnus expansion by considering its behaviour with respect to the time-symmetry. In the following, we review the part of their work corresponding to their strategy of rewriting Magnus expansion terms, as follows: by taking advantage of the time-symmetry property, they considered a Taylor expansion of $A(t)$ around $t_{\frac{1}{2}} = t_0 + \frac{h}{2}$ as:

\begin{equation}
A(t) = \sum_{i\,\geq 0}{a_i(t - t_{\frac{1}{2}})^i}, \hbox{ where } a_i = \frac{1}{i!} \frac{d^i A(t)}{dt^i}\Big|_{t\,=\,t_{\frac{1}{2}}},
\end{equation}  
and computed the corresponding terms of the component $\Omega_{k}(t_0+h, t_0)$ in the Magnus expansion, where:
$$\Omega_{k} = h^k \!\!\!\!\!\!\sum_{1 \leq i_1, \ldots, i_k \leq N}{\!\!\!\!\!\!\beta_{i_1 \ldots i_k} [A(t_{i_1}), [A(t_{i_2}), [\ldots, [A(t_{i_{k-1}}), A(t_{i_k})]\ldots]]]} + \Cal{O}(h^{2n+1}), \hbox{ for } t_{i_k} \!\!\in [t_0, t_0 +h],$$
by taking into account the linear relations between different nested commutators due to the Jacobi identity. We give here the calculation for the components $\Omega_{k}$, up to $k=6$, obtained by their code \cite[section 3]{BFJ2000}:

\begin{align*}
\Omega_1 &= q_1 + \frac{1}{12} q_3 + \frac{1}{80} q_5 +  \frac{1}{448} q_7\,.&\\\\
\Omega_2 &= \frac{-1}{12} [q_1, q_2]  + \Big(\frac{-1}{80} [q_1, q_4] + \frac{1}{240} [q_2, q_3] \Big) + \Big(\frac{-1}{448} [q_1, q_6] + \frac{1}{2240} [q_2, q_5] - \frac{1}{1344} [q_3, q_4]\Big)\,.&\\\\
\Omega_3 &=  \Big(\frac{1}{360} [q_1, [q_1, q_3]] - \frac{1}{240} [q_2, [q_1, q_2]] \Big) + \Big(\frac{1}{1680} [q_1, [q_1, q_5]] - \frac{1}{2240} [q_1, [q_2, q_4]] + \frac{1}{6720} [q_2, [q_2, q_3]] + &\\
&\;\;\;\;\;\;\frac{1}{6048} [q_3, [q_1, q_3]] -  \frac{1}{840} [q_4, [q_1, q_2]]\Big)\,.&\\\\
\Omega_4 &= \frac{1}{720} [q_1, [q_1, [q_1, q_2]]] + \Big(\frac{1}{6720} [q_1, [q_1, [q_1, q_4]]] - \frac{1}{7560} [q_1, [q_1, [q_2, q_3]]] + \frac{1}{4032} [q_1, [q_3, [q_1, q_2]]] + &\\
&\;\;\;\;\;\;\frac{11}{60480} [q_2, [q_1, [q_1, q_3]]] -  \frac{1}{6720} [q_2, [q_2, [q_1, q_2]]]\Big)\,.&\\\\
\Omega_5 &= \frac{-1}{15120} [q_1, [q_1, [q_1, [q_1, q_3]]]] - \frac{1}{30240} [q_1, [q_1, [q_2, [q_1, q_2]]]]  + \frac{1}{7560} [q_2, [q_1, [q_1, [q_1, q_2]]]] \,.&\\\\
\Omega_6 &= \frac{-1}{30240} [q_1, [q_1, [q_1, [q_1, [q_1, q_2]]]]],
\end{align*}

\noindent where $q_i = a_{i-1} h^i$, for $i \geq 1$, are matrices.\\

The set $E:=\{q_i : i \in \mathbb{N} \}$\label{E40} can be considered as a generating set of a graded free Lie algebra, with $|q_i| = i$ \cite{HM99}. In their computations, S. Blanes, F. Casas, and J. Ros computed the dimensions of the graded free Lie algebra $\Cal{L}(E)$ generated by the set $E$, according to Munthe-Kaas and Owren's work \cite{HM99}. Also, they computed the number of elements of the Lie algebra $\Cal{L}(E)$\label{E41} appearing in the Magnus expansion, when a Taylor series of $A(t)$ around $t =t_0$ and $t = t_{\frac{1}{2}}$ respectively.\\

Here, we review some of their computations as follows: at the order $s=4$, we have $dim_{_{\leq\,4}}\Cal{L} = 7$, with basis elements $q_1, q_2, q_3, q_4, [q_1, q_2], [q_1, q_3], [q_1, [q_1, q_2]]$, such that six of these elements appear in Magnus expansion around $t = t_0$, that are: $q_1, q_2, q_3, q_4, [q_1, q_2], [q_1, q_3]$, with two commutators. Whereas, three elements, $q_1, q_3, [q_1, q_2]$, only appear in Magnus expansion around $t = t_{\frac{1}{2}}$, with one commutator, as it is shown above. For more details about these results see \cite[Section 3, Pages 439-441]{BFJ2000}.

Now, we try to introduce a combinatorial vision of the work above, using the notion of the monomial basis for free Lie algebra $\Cal{L}(E)$, that we obtained in our work \cite{AM20}. Let $\Cal{P\!L}(\racine)$ (respectively $\Cal{P\!L}(E)$) be the free pre-Lie algebra with one generator $\racine$ (respectively generated by the set $\big\{ \racineLabai \; : a_i \in E \big\}$), together with the grafting $\to$\label{to7}. Denote $\widehat{\Cal{P\!L}}(\racine)$ (respectively $\widehat{\Cal{P\!L}}(E)$) by the completion of $\Cal{P\!L}(\racine)$ (respectively $\Cal{P\!L}(E)$) with respect to the filtration given by the degree, which are pre-Lie algebras together with the pre-Lie grafting. Let $a = \sum\limits_{ e \in E}^{}{\lambda_e \racineLabae}$ \;be an element in $\widehat{\Cal{P\!L}}(E)$, that is an infinite linear combination of the generators $\racineLabae\,, e \in E$. \\

Define the map $G_a: \Cal{P\!L}(\racine) \rightarrow \widehat{\Cal{P\!L}}(E)$ to be the unique pre-Lie homomorphism that is induced by the universal property of the freeness of $\Cal{P\!L}(\racine)$:

\begin{figure}[h]
\centering
\diagrama{
\xymatrix{
\{\racine\}  \ar@{^(_->}[r]^i \ar@{->}[dr]_{f }&  \Cal{P\!L}(\racine) \ar@{->}[d]^{G_a}\\
 & \widehat{\Cal{P\!L}}(E)}}
\label{}
\caption{}
\end{figure}

\noindent such that $G_a(\racine) = a$. 

\begin{lem}\label{g. result}
For any (undecorated) planar rooted tree $\tau$, we have:
\begin{equation}\label{another rewriting}
G_a\big(\overline{\Psi}(\tau) \big) = \sum_{\delta : V(\tau) \rightarrow E}{\Big(\prod\limits_{v \in V(\tau)}^{}{\lambda_{\delta(v)}}\Big)\; \overline{\Psi}(\tau_{\delta})},
\end{equation}
where $\overline{\Psi}: \Cal{T}^{E}_{\!\!pl} \rightarrow \Cal{T}^{E}$\label{cnp6}, in the right hand side, is described in \cite[Subsection $2.2$]{AM2014} (we use the same letter for the undecorated version from $\Cal{T}_{\!\!pl}$ onto $\Cal{T}$), and where $\tau_{\delta} \in  T^{E}_{\!\!pl}$ is the tree $\tau$ decorated according to the map $\delta$.    
\end{lem}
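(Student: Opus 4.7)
The plan is to proceed by induction on the number $|V(\tau)|$ of vertices of the planar rooted tree $\tau$. For the base case, take $\tau=\racine$, so $V(\tau)$ consists of a single vertex $v_0$. Then $\overline{\Psi}(\racine)=\racine$ and by the definition of $G_a$ we have
$$G_a\bigl(\overline{\Psi}(\racine)\bigr)=G_a(\racine)=a=\sum_{e\in E}\lambda_e\,\racineLabae,$$
which is exactly the right-hand side of \eqref{another rewriting}, since a decoration $\delta\colon\{v_0\}\to E$ is just the choice of some $e\in E$ and $\racine_\delta=\racineLabae$.

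For the inductive step, write $\tau=B_+(\tau_1\cdots\tau_k)$ with each $|V(\tau_i)|<|V(\tau)|$. Using the recursive definition of $\overline{\Psi}$ on the left Butcher decomposition
$\tau=\tau_1\lbutcher(\tau_2\lbutcher(\cdots\lbutcher(\tau_k\lbutcher\racine)\cdots))$, we have
$$\overline{\Psi}(\tau)=\overline{\Psi}(\tau_1)\to\bigl(\overline{\Psi}(\tau_2)\to\bigl(\cdots\to\bigl(\overline{\Psi}(\tau_k)\to\racine\bigr)\cdots\bigr)\bigr).$$
Apply $G_a$: since $G_a$ is a pre-Lie morphism it preserves the grafting $\to$, so
$$G_a\bigl(\overline{\Psi}(\tau)\bigr)=G_a\bigl(\overline{\Psi}(\tau_1)\bigr)\to\Bigl(\cdots\to\bigl(G_a\bigl(\overline{\Psi}(\tau_k)\bigr)\to G_a(\racine)\bigr)\cdots\Bigr).$$
The inductive hypothesis then expresses each $G_a\bigl(\overline{\Psi}(\tau_i)\bigr)$ as a sum over decorations $\delta_i\colon V(\tau_i)\to E$, and $G_a(\racine)=\sum_{e_0\in E}\lambda_{e_0}\racineLabae$.

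Expanding the nested pre-Lie product by multilinearity distributes over all tuples $(\delta_1,\ldots,\delta_k,e_0)$. The central combinatorial observation, which is really the only nontrivial bookkeeping in the argument, is that such a tuple is in canonical bijection with a single decoration $\delta\colon V(\tau)\to E$: set $\delta|_{V(\tau_i)}=\delta_i$ and $\delta(\mathrm{root})=e_0$. Under this identification the scalar coefficient becomes
$$\lambda_{e_0}\prod_{i=1}^{k}\prod_{v\in V(\tau_i)}\lambda_{\delta_i(v)}=\prod_{v\in V(\tau)}\lambda_{\delta(v)},$$
while, by the very same recursive formula for $\overline{\Psi}$ applied now to the decorated tree $\tau_\delta=B_+\bigl((\tau_1)_{\delta_1}\cdots(\tau_k)_{\delta_k}\bigr)$ with root decoration $e_0$, the tree factor becomes
$$\overline{\Psi}\bigl((\tau_1)_{\delta_1}\bigr)\to\Bigl(\cdots\to\bigl(\overline{\Psi}\bigl((\tau_k)_{\delta_k}\bigr)\to\racineLabae_{\!e_0}\bigr)\cdots\Bigr)=\overline{\Psi}(\tau_\delta).$$
Reassembling the sum yields the claimed identity.

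The argument is essentially a bookkeeping induction: the only substantive step is the last one, namely that the recursive definition of $\overline{\Psi}$ commutes with decoration, so that the tuple $(\delta_1,\ldots,\delta_k,e_0)$ of subtree decorations and the single decoration $\delta$ of $V(\tau)$ produce literally the same element $\overline{\Psi}(\tau_\delta)$. This is where the choice of the left Butcher product as the recursive building block of $\overline{\Psi}$ is used in an essential way, and it is the part where one must be careful to match planar subtrees with the corresponding vertices of $V(\tau)$ consistently.
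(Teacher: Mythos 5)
Your proof is correct and follows essentially the same route as the paper's: induction on the number of vertices, using that $G_a$ is a pre-Lie morphism, that $\overline{\Psi}$ converts the (left Butcher) decomposition of $\tau$ into nested pre-Lie graftings, and the canonical bijection between tuples of subtree decorations and a single decoration $\delta\colon V(\tau)\to E$. The only cosmetic difference is that you unfold $\tau=B_+(\tau_1\cdots\tau_k)$ as a $k$-fold nested left Butcher product while the paper uses the binary splitting $\tau=\tau_1\lbutcher\tau_2$ and applies the induction hypothesis to both factors; both versions hinge on the same key observation, which you correctly identify, that the recursive definition of $\overline{\Psi}$ commutes with decoration.
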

\begin{proof}
Let $\tau$ be any (undecorated) planar rooted tree, we have that $\overline{\Psi}(\tau)= m(\racine, \to)$ is a monomial, in $\Cal{P\!L}(\racine)$, of the one-vertex tree $\racine$ multiplied (by itself) using the pre-Lie product $\to$. From the definition of $G_a$ above, we get:
\begin{equation}\label{def. of Ga}
G_a\big(\overline{\Psi}(\tau)\big) = G_a\big(m(\racine, \to)\big) = m(a, \to), 
\end{equation}
where $m(a, \to)$ is the monomial of $a$, in $\widehat{\Cal{P\!L}}(E)$, induced from the monomial $m(\racine, \to)$ by sending the one-vertex tree into its image $G_a(\racine)=a$.\\

We proceed by induction on the number $n$ of vertices, the case $n = 1$ being obvious. Suppose that the formula \eqref{another rewriting} is true up to $n-1$ vertices. Let $\tau \in T^{n}_{\!pl}$, we have that $\tau$ can be written in a unique way as $\tau = \tau_1 \lbutcher \tau_2$, hence\label{lb10}:
\begin{align*}
G_a\big(\overline{\Psi}(\tau) \big) & = G_a\big(\overline{\Psi}(\tau_1 \lbutcher \tau_2) \big) &\\
& = G_a\big(\overline{\Psi}(\tau_1) \to \overline{\Psi}(\tau_2)\big)&\\
& = G_a\big(\overline{\Psi}(\tau_1)\big) \to  G_a\big(\overline{\Psi}(\tau_2)\big)&\\
& = \sum_{{\delta_1 : V(\tau_1) \rightarrow E} \atop {\delta_2 : V(\tau_2) \rightarrow E}}{\Big( \prod\limits_{v \in V(\tau_1)}^{}{\lambda_{\delta_1(v)}} \; \prod\limits_{v' \in V(\tau_2)}^{}{\lambda_{\delta_2(v')}} \Big)\; \overline{\Psi}(\tau_{_{\!1,\,\delta_{1}}}) \to \overline{\Psi}(\tau_{_{\!2,\,\delta_{2}}})}&\\
& = \sum_{ \delta : V(\tau) \rightarrow E}{\Big( \prod\limits_{v \in V(\tau)}^{}{\lambda_{\delta(v)}} \Big)\;\overline{\Psi}(\tau_{\delta})}, \hbox{ (by setting } \tau_{\delta} = \tau_{_{\!1,\,\delta_{1}}} \!\!\lbutcher \tau_{_{\!2,\,\delta_{2}}}).&
\end{align*}  
\end{proof}

\begin{lem}\label{PL(E)- Magnus element}
The pre-Lie Magnus element $\dot{\Omega}(a)$ in $\widehat{\Cal{P\!L}}(E)$\label{E42} can be rewritten as:
\begin{equation}
\dot{\Omega}(a) = \sum_{\tau \in T^{e1}_{\!pl}}{\gamma(\tau)\;G_a\big(\overline{\Psi}(\tau)\big)}, 
\end{equation}
where $a = \sum\limits_{ e \in E}^{}{\lambda_e \racineLabae} \in \widehat{\Cal{P\!L}}(E)$.
\end{lem}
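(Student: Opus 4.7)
The plan is to combine the tree-indexed formula for the pre-Lie Magnus expansion with one generator, already established earlier, with the universal pre-Lie homomorphism property of $G_a$. Concretely, by Theorem \ref{NRT} together with Lemma \ref{rel. between F and Psi}, the pre-Lie Magnus element in $\widehat{\Cal{P\!L}}(\racine)$ admits the expression
\begin{equation*}
\dot{\Omega}(\racine) \;=\; \sum_{\tau \in T^{e1}_{\!pl}} \gamma(\tau)\, \overline{\Psi}(\tau).
\end{equation*}
So the statement is equivalent to the assertion $G_a\bigl(\dot{\Omega}(\racine)\bigr) = \dot{\Omega}(a)$, provided we can legitimately apply $G_a$ term-by-term to this infinite sum.

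First I would extend $G_a$ to a continuous pre-Lie homomorphism between the completions $\widehat{\Cal{P\!L}}(\racine) \to \widehat{\Cal{P\!L}}(E)$. This extension is well-defined and unique because $G_a$ respects the grading by the number of vertices: the image of a degree $n$ element lies in the degree-$n$ part of $\widehat{\Cal{P\!L}}(E)$ (where the generators $\racineLabae$ carry degree one), so convergence is preserved under the filtration topology. Applying the extended $G_a$ to the displayed formula then yields, by linearity and continuity,
\begin{equation*}
G_a\bigl(\dot{\Omega}(\racine)\bigr) \;=\; \sum_{\tau \in T^{e1}_{\!pl}} \gamma(\tau)\, G_a\bigl(\overline{\Psi}(\tau)\bigr).
\end{equation*}

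Next I would show that $G_a\bigl(\dot{\Omega}(\racine)\bigr)$ coincides with $\dot{\Omega}(a)$. The cleanest route is induction on the homogeneous components: writing $\dot{\Omega}(\racine) = \sum_{n \geq 1} \dot{\Omega}_n(\racine)$ as in page \pageref{om1}, the base case $G_a(\dot{\Omega}_1(\racine)) = G_a(\racine) = a = \dot{\Omega}_1(a)$ is immediate. For the inductive step, the recursive formula for $\dot{\Omega}_n$ involves only iterated applications of the operator $L_{\rhd}[\dot{\Omega}_k]$, i.e.\ only the pre-Lie product. Since $G_a$ is a pre-Lie homomorphism,
\begin{equation*}
G_a\bigl(L_{\rhd}[\dot{\Omega}_{k_1}] \cdots L_{\rhd}[\dot{\Omega}_{k_j}](\racine)\bigr)
\;=\; L_{\rhd}[G_a(\dot{\Omega}_{k_1})] \cdots L_{\rhd}[G_a(\dot{\Omega}_{k_j})](a),
\end{equation*}
and applying the induction hypothesis $G_a(\dot{\Omega}_{k_i}(\racine)) = \dot{\Omega}_{k_i}(a)$ identifies the right-hand side with the corresponding term of $\dot{\Omega}_n(a)$.

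The only delicate point I expect is verifying that the fixed-point/recursive description of the pre-Lie Magnus expansion is intrinsic enough to be transported by any pre-Lie homomorphism, with no hidden ambient structure (such as the Grossman-Larson or symmetric product appearing in equation \eqref{logarithm for.}) leaking in. Since the recursion in \eqref{pre-Lie formula A} only involves iterated left multiplications $L_{\rhd}$ and scalar Bernoulli coefficients, this concern dissolves and the induction closes. Combining the two steps, $\dot{\Omega}(a) = G_a(\dot{\Omega}(\racine)) = \sum_{\tau \in T^{e1}_{\!pl}} \gamma(\tau)\, G_a(\overline{\Psi}(\tau))$, as claimed; Lemma \ref{g. result} can then, if desired, be used to further expand each $G_a(\overline{\Psi}(\tau))$ as a sum over decorations $\delta : V(\tau) \to E$.
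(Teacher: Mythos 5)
Your proposal is correct, and its first half is exactly the paper's argument: start from $\dot{\Omega}(\racine)=\sum_{\tau\in T^{e1}_{\!pl}}\gamma(\tau)\overline{\Psi}(\tau)$ (Theorem \ref{NRT} plus Lemma \ref{rel. between F and Psi}) and push it through the extension of $G_a$ to the completions. Where you go further is in the second half: the paper simply \emph{defines} $\dot{\Omega}(a):=G_a\big(\dot{\Omega}(\racine)\big)$ and stops, whereas you prove that this image coincides with the intrinsic Magnus element of $a$ in $\widehat{\Cal{P\!L}}(E)$, by induction on the homogeneous components of the recursion \eqref{pre-Lie formula A}, using that $G_a$ is a pre-Lie morphism and hence intertwines the operators $L_{\rhd}[\dot{\Omega}_{k}]$. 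That verification is a genuine (if routine) addition which the paper leaves implicit, and it is worth having, since the Magnus element of $a$ is already characterized elsewhere in the paper as the unique solution of a fixed-point equation. One small imprecision: $G_a$ does not send the degree-$n$ part of $\Cal{P\!L}(\racine)$ into the degree-$n$ part of $\widehat{\Cal{P\!L}}(E)$ when the generators $e\in E$ carry degrees $|e|>1$ (as in the application where $|a_i|=i$); it sends it into the part of degree $\geq n$, which is all that is needed for continuity and convergence of the infinite sum, so the argument stands.
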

\begin{proof}
From Theorem \ref{NRT} and Lemma \ref{rel. between F and Psi}, we have that:
\begin{equation}
\dot{\Omega}(\racine) = \sum_{\tau \in T^{e1}_{\!pl}}{\gamma(\tau)\;\overline{\Psi}(\tau)}.
\end{equation} 
We have that $\dot{\Omega}(\racine)$ is an element in $\widehat{\Cal{P\!L}}(\racine)$, and the map $G_a$ can be extended linearly from $\widehat{\Cal{P\!L}}(\racine)$ into $\widehat{\Cal{P\!L}}(E)$, such that:
$$\dot{\Omega}(a) := G_a \Big(\dot{\Omega}(\racine)\Big) =\sum_{\tau \in T^{e1}_{\!pl}}{\gamma(\tau)\;G_a\big(\overline{\Psi}(\tau)\big)}.$$
This proves the Lemma.
\end{proof}

In Lemma \ref{g. result} above, let us denote $\lambda(\tau_{\delta}) := \prod\limits_{v \in V(\tau)}^{}{\lambda_{\delta(v)}}$. Hence, we can simplify the formula \eqref{another rewriting} as:

\begin{equation}\label{obtained}
G_a\big(\overline{\Psi}(\tau) \big) = \!\!\!\!\sum_{\delta : V(\tau) \rightarrow E}{\lambda{(\tau_{\delta})} \; \overline{\Psi}(\tau_{\delta})}.
\end{equation}

\noindent Consequently, we can get the following result.

\begin{prop}
The pre-Lie Magnus expansion can be rewritten: 
\begin{equation}\label{generalization case}
\dot{\Omega}(a) = \sum_{\sigma\,\in T^{E,\,e1}_{\!pl}}{\!\!\!\gamma(\sigma)\; \lambda(\sigma)\,\overline{\Psi}(\sigma)},
\end{equation}
for any $\sigma\,\in T^{E,\,e1}_{\!pl}$. Here $\gamma : \Cal{T}^E_{\!\!pl} \rightarrow K$\label{e13} defined as in \eqref{definition of alph.}, forgetting the decoration. 
\end{prop}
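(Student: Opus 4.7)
The plan is to combine Lemma \ref{PL(E)- Magnus element} with the explicit description of $G_a\big(\overline{\Psi}(\tau)\big)$ obtained in Lemma \ref{g. result} (equation \eqref{obtained}), and then reindex a double sum over pairs (undecorated tree, decoration) as a single sum over decorated trees.

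First I would recall from Lemma \ref{PL(E)- Magnus element} that
\[
\dot{\Omega}(a) \;=\; \sum_{\tau \in T^{e1}_{\!pl}} \gamma(\tau)\; G_a\!\big(\overline{\Psi}(\tau)\big).
\]
Next, I would substitute the formula \eqref{obtained} for $G_a\!\big(\overline{\Psi}(\tau)\big)$, yielding the double sum
\[
\dot{\Omega}(a) \;=\; \sum_{\tau \in T^{e1}_{\!pl}} \gamma(\tau) \sum_{\delta : V(\tau) \to E} \lambda(\tau_{\delta})\; \overline{\Psi}(\tau_{\delta}).
\]

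The next step is the reindexing. The assignment $(\tau,\delta) \mapsto \tau_{\delta}$ sets up a bijection between pairs consisting of an undecorated planar rooted tree $\tau$ and a decoration $\delta : V(\tau) \to E$, on the one hand, and decorated planar rooted trees $\sigma \in T^{E}_{\!pl}$, on the other. The crucial observation is that the defining condition of the subspace $T^{e1}_{\!pl}$ (no vertex of odd fertility $>1$) depends only on the shape of the tree and not on its decoration; hence the restriction $\tau \in T^{e1}_{\!pl}$ corresponds exactly to $\sigma = \tau_{\delta} \in T^{E,\,e1}_{\!pl}$. Moreover, since $\gamma$ is defined via fertilities of vertices (see \eqref{definition of alph.}), we have $\gamma(\tau) = \gamma(\tau_{\delta}) = \gamma(\sigma)$, the map $\gamma$ being extended to decorated trees by forgetting the decoration. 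Putting this together gives
\[
\dot{\Omega}(a) \;=\; \sum_{\sigma \in T^{E,\,e1}_{\!pl}} \gamma(\sigma)\,\lambda(\sigma)\,\overline{\Psi}(\sigma),
\]
which is the desired identity.

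There is no real obstacle: the proof is essentially bookkeeping combining the two previous lemmas. The only subtlety to flag carefully is that $\gamma$ and the membership condition $e1$ both factor through the forgetful map from decorated to undecorated trees, which is what legitimizes the reindexing; this should be stated explicitly to avoid confusion between the two uses of the symbol $\gamma$ (once on $\Cal{T}_{\!\!pl}$, once on $\Cal{T}^E_{\!\!pl}$).
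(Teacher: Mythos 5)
Your proof is correct and follows exactly the paper's own argument: substitute the expression for $G_a\big(\overline{\Psi}(\tau)\big)$ from \eqref{obtained} into Lemma \ref{PL(E)- Magnus element} and reindex the resulting double sum over pairs $(\tau,\delta)$ as a single sum over decorated trees $\sigma\in T^{E,\,e1}_{\!pl}$. Your explicit remarks that both $\gamma$ and the $e1$-condition factor through the forgetful map are a welcome clarification of a point the paper leaves implicit, but the route is the same.
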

\begin{proof}
From Lemma \ref{PL(E)- Magnus element}, and by substituting $G_a\big(\overline{\Psi}(\tau) \big)$ obtained in \eqref{obtained}, we get:
$$\dot{\Omega}(a) = \sum_{{\tau\,\in T^{e1}_{\!pl}} \atop {\delta : V(\tau) \rightarrow E}}{\!\!\!\!\gamma(\tau)\; \lambda(\tau_{\delta})\,\overline{\Psi}(\tau_{\delta})} = \sum_{\sigma\,\in T^{E,\,e1}_{\!pl}}{\!\!\!\gamma(\sigma)\; \lambda(\sigma)\,\overline{\Psi}(\sigma)}.$$
This proves the Proposition.
\end{proof}

\begin{rmk}
The formula for the pre-Lie Magnus expansion in \eqref{generalization case} can be considered as a generalization of the formula \eqref{KM formula}. In other words, it is a decorated version of \eqref{KM formula}, taking into account the relation between the maps $F$ and $\overline{\Psi}$ described in Lemma \ref{rel. between F and Psi}. 
\end{rmk}

The pre-Lie homomorphism $\Phi : \big(\Cal{P\!L}(E), \to \big)  \longrightarrow \big( \Cal{L}(E), \rhd \big)$\label{E43}, described in our work \cite[Section $4$]{AM20}, respects the degree, it is then  continuous for the topologies defined by the corresponding decreasing filtrations\footnote{These topologies are induced by metrics defined on pre-Lie algebra using compatible decreasing filtrations described in \cite{NJ80, AR81, D.M}.}. We denote by the same letter $\Phi$ the pre-Lie homomorphism from the completed pre-Lie algebra $\widehat{\Cal{P\!L}}(E)$ onto $\widehat{\Cal{L}}(E)$:

\begin{figure}[h]
\centering
\diagrama{
\xymatrix{
\Cal{P\!L}(E) \ar@{^(_->}[r]^i \ar@{->>}[d]_{\Phi}&  \widehat{\Cal{P\!L}}(E) \ar@{->>}[d]^{\Phi}\\
\Cal{L}(E) \ar@{^(_->}[r]^i &  \widehat{\Cal{L}}(E)}}
\caption{}
\label{}
\end{figure}

\noindent We can get another representation of pre-Lie Magnus expansion, as in the following result.

\begin{cor}
The pre-Lie Magnus expansion in $\widehat{\Cal{L}}(E)$ can be rewritten as:
\begin{equation}\label{L(E)- Magnus element}
\dot{\Omega}(x) = \sum_{\sigma\,\in T^{E,\,e1}_{\!pl}}{\!\!\!\gamma(\sigma)\; \lambda(\sigma)\,\Phi\Big(\overline{\Psi}(\sigma)\Big)},
\end{equation}
where $x = \Phi(a) = \sum\limits_{e \in E}^{}{ \lambda_e e } \in \widehat{\Cal{L}}(E)$, for $e = \Phi(\racineLabae) \in E$\label{e14}.
\end{cor}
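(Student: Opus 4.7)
The plan is to deduce this corollary directly from the preceding Proposition by transporting the identity through the pre-Lie homomorphism $\Phi$. Specifically, I would start from the already-established formula
\begin{equation*}
\dot{\Omega}(a) = \sum_{\sigma\,\in T^{E,\,e1}_{\!pl}}\gamma(\sigma)\,\lambda(\sigma)\,\overline{\Psi}(\sigma)
\end{equation*}
in the completed free pre-Lie algebra $\widehat{\Cal{P\!L}}(E)$ and apply $\Phi$ termwise. The coefficients $\gamma(\sigma)\lambda(\sigma)$ are scalars, so they pass through $\Phi$ unchanged, and continuity of $\Phi$ with respect to the filtrations allows the infinite sum to be interchanged with $\Phi$.

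The only conceptual point to verify is that $\Phi\big(\dot{\Omega}(a)\big) = \dot{\Omega}(x)$ where $x = \Phi(a)$. For this I would invoke the universal/functorial nature of the pre-Lie Magnus recursion: the element $\dot{\Omega}(a)$ is defined by the fixed-point equation
\begin{equation*}
\dot{\Omega}(a) = \frac{L_{\rhd}[\dot{\Omega}(a)]}{\exp(L_{\rhd}[\dot{\Omega}(a)])-1}(a),
\end{equation*}
an expression built entirely from the pre-Lie product and scalar operations (namely the Bernoulli-weighted exponential series in $L_{\rhd}$). Since $\Phi$ is a pre-Lie homomorphism, it intertwines $L_{\rhd}$ on both sides, hence maps the unique solution in $\widehat{\Cal{P\!L}}(E)$ to the unique solution in $\widehat{\Cal{L}}(E)$ with generator $\Phi(a) = x$. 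A short induction on the grading, using $\dot{\Omega}_1(a) = a$ and the recursion for $\dot{\Omega}_n$ in terms of iterated $L_{\rhd}$-actions, makes this rigorous.

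Combining the two observations, I obtain
\begin{equation*}
\dot{\Omega}(x) = \Phi\big(\dot{\Omega}(a)\big) = \sum_{\sigma\,\in T^{E,\,e1}_{\!pl}}\gamma(\sigma)\,\lambda(\sigma)\,\Phi\big(\overline{\Psi}(\sigma)\big),
\end{equation*}
which is exactly the claimed formula, with the identification $e = \Phi(\racineLabae)$ ensuring that $x = \sum_{e\in E}\lambda_e e$.

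I do not anticipate a real obstacle here: the statement is essentially a functoriality corollary. The only subtle point worth spelling out is the continuity/convergence argument allowing the termwise application of $\Phi$ to the infinite series, which follows because $\Phi$ preserves the grading (hence the decreasing filtration) and the series for $\dot{\Omega}(a)$ has only finitely many terms in each filtration layer, as $T^{E,\,e1,\,n}_{\!pl}$ is finite for each $n$.
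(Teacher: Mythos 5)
Your argument is correct and is exactly the (implicit) argument of the paper: the corollary is obtained by applying the continuous pre-Lie homomorphism $\Phi$ termwise to formula \eqref{generalization case}, together with the functoriality $\Phi\big(\dot{\Omega}(a)\big)=\dot{\Omega}\big(\Phi(a)\big)$, which the paper leaves unstated but which you justify appropriately. No gaps.
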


As a particular case, let us take $E = \bigsqcup\limits_{i \in \mathbb{N}}^{}{E_i}$,  with $\# E_i = 1$, for all $i \in \mathbb{N}$, i.e. $E = \{ a_i : i \in \mathbb{N} \}$, such that $|a_i| = i$, and the generators are ordered by:
$$ a_1 < a_2 < \cdots < a_s < \cdots .$$

For any $\sigma\,\in T^{E,\,e1}_{\!pl}$, $\Phi\Big(\overline{\Psi}(\sigma)\Big)$ is an element in $\Cal{L}(E)$. From our work in \cite[section $6$]{AM20}, we have that the set $\widetilde{\Cal{B}} = \big\{ \Phi(t) : t \in O(I) \big\}$ forms a monomial basis for the pre-Lie algebra $\big( \Cal{L}(E), \rhd \big)$ (respectively for the free Lie algebra $\big( \Cal{L}(E), [\cdot, \cdot] \big)$), where the pre-Lie product $\rhd$ is defined by:
\begin{equation}
x \rhd y := \frac{1}{|x|} [x, y],
\end{equation}
for $x, y \in \Cal{L}(E)$, hence:

$$\Phi\Big(\overline{\Psi}(\sigma)\Big) = \alpha_{1} \Phi(t_1) +  \alpha_{2} \Phi(t_2) + \cdots + \alpha_{k} \Phi(t_k) ,$$
is a linear combination of basis elements $\Phi(t_i),\,t_i\!\!\in\!\!O(I)$, multiplied by coefficients $\alpha_i\!\!\in\!\!K$, for all $i=1, \ldots, k$, where $I$ is the (two-sided) ideal of $\Cal{T}^E$ generated by all elements on the form:
$$|s| (s \to t ) + |t| (t \to s), \hbox{ for } s, t \in \Cal{T}^E.$$
Thus, the pre-Lie Magnus expansion in \eqref{L(E)- Magnus element} can be expressed using the monomial basis elements $\Phi(t)$, for $t \in O(I)$\label{Tf10}. Here, we calculate the few first reduced pre-Lie Magnus elements $\dot{\Omega}_n(x)$ in $\widehat{\Cal{L}}(E)$, up to $n=5$:

\begin{align*}
\dot{\Omega}_1(x) & = \lambda_1 a_1.&\\
\dot{\Omega}_2(x) & = \lambda_2 a_2.&\\
\dot{\Omega}_3(x) & = \lambda_3 a_3 - B_1^2 \lambda_1 \lambda_2\,a_1 \rhd a_2.&\\
\dot{\Omega}_4(x) & = \lambda_4 a_4 + B_1\frac{2}{3} \lambda_1 \lambda_3\,a_1 \rhd a_3 + B_1^2 \frac{1}{2} \lambda_1^2 \lambda_2\,(a_1 \rhd a_2) \rhd a_1.&\\
\dot{\Omega}_5(x) & = \lambda_5 a_5 + B_1 \big( \frac{3}{4} \lambda_1 \lambda_4\,a_1 \rhd a_4 + \frac{1}{3} \lambda_2 \lambda_3\,a_2 \rhd a_3 \big)+ B_1 \frac{5}{9} \lambda_1^2 \lambda_3\,(a_1 \rhd a_3) \rhd a_1 + B_1^2 \frac{11}{12} \lambda_1^3 \lambda_2\,\big((a_1 \rhd a_2) \rhd a_1\big) \rhd a_1,&
\end{align*}

\noindent and using $ a_i \rhd a_j = \frac{1}{|a_i|} [a_i, a_j]$, for all $i, j$, we get:

\begin{align*}
\dot{\Omega}_1(x) & = \lambda_1 a_1.&\\
\dot{\Omega}_2(x) & = \lambda_2 a_2.&\\
\dot{\Omega}_3(x) & = \lambda_3 a_3 - B_1^2 \lambda_1 \lambda_2\, [a_1, a_2].&\\
\dot{\Omega}_4(x) & = \lambda_4 a_4 + B_1\frac{2}{3} \lambda_1 \lambda_3\, [a_1, a_3] + B_1^2 \frac{1}{6} \lambda_1^2 \lambda_2\, [[a_1, a_2], a_1].&\\
\dot{\Omega}_5(x) & = \lambda_5 a_5 + B_1 \big( \frac{3}{4} \lambda_1 \lambda_4\,[a_1, a_4] + \frac{1}{6} \lambda_2 \lambda_3\, [a_2, a_3] \big)+ B_1 \frac{5}{36} \lambda_1^2 \lambda_3\, [[a_1, a_3], a_1] + B_1^2 \frac{11}{144} \lambda_1^3 \lambda_2\,\big[[[a_1, a_2], a_1], a_1\big].&
\end{align*}

Here, we link between our work in \cite[Section $4$]{AM20}, on the pre-Lie construction of the Lie algebras, and the work of  S. Blanes, F. Casas and J. Ros \cite{BFJ2000}, on the writing of Magnus expansion. Firstly, we shall consider the generators $\{q_i : i \geq 1\}$, of the Lie algebra $\Cal{L}(E)$ in their work, as matrix-valued functions in $h$. Define a pre-Lie product on the set of formal power series $h \mathbb{R}[[h]]$ by:
\begin{equation}\label{def. I}
(f \rhd g)(h) := [\!\int\limits_{0}^{h}{\frac{f(s)}{s} ds}, g(h)], \hbox{ for any } f, g \in h \mathbb{R}[[h]].
\end{equation}
This pre-Lie product described in \eqref{def. I} can be visualized as in the following diagram:

\begin{figure}[h]
\centering
\diagrama{
\xymatrix{
h \mathbb{R}[[h]] \otimes h \mathbb{R}[[h]] \ar@{->}[r]^{\hspace{25pt}\rhd}\ar@{->}[d]_{\frac{1}{h} \otimes \frac{1}{h}}&  h \mathbb{R}[[h]] \ar@{<-}[d]^{h}\\
\mathbb{R}[[h]] \otimes \mathbb{R}[[h]]  \ar@{->}[r]^{\hspace{25pt}\widetilde{\rhd}}&  \mathbb{R}[[h]]}}
\caption{The description of $\rhd$.}
\label{}
\end{figure}
\noindent where $f\widetilde{\rhd} g(h)=[\int\limits_0^h{f(s)ds}, g(h)]$. Hence, for $q_i(h) = a_{i-1} h^i, q_j(h) = a_{j-1} h^j$ any two generators of $\Cal{L}(E)$\label{Tf11}, we can apply the pre-Lie product defined above in \eqref{def. I} as follows:
\begin{align*}
(q_i \rhd q_j)(h) & = [\!\int\limits_{0}^{h}{\frac{q_i(s)}{s} ds}, q_j(h)] &\\
& = [ a_{i-1} \!\int\limits_{0}^{h}{s^{i-1} ds}, q_j(h)]&\\
& = [ \frac{1}{i} a_{i-1} {h^{i}}, q_j(h)]&\\
& = \frac{1}{i} [q_{i}, q_j](h),&\\
\end{align*}
where $|q_i| = i$, for $i \geq 1$. Simply, we shall write $q_i \rhd q_j = \frac{1}{|q_i|} [q_{i}, q_j]$, for all $i, j \geq 1$. In following, we rewrite the calculations of the three authors for the components $\Omega_k$ up to $k=6$, using the pre-Lie product defined above:

\begin{align*}
\Omega_1 &= q_1 + \frac{1}{12} q_3 + \frac{1}{80} q_5 +  \frac{1}{448} q_7\,.&\\\\
\Omega_2 &= \frac{-1}{12} (q_1 \rhd q_2)  + \Big(\frac{-1}{80} (q_1 \rhd q_4) + \frac{1}{120} (q_2 \rhd q_3) \Big) + \Big(\frac{-1}{448} (q_1 \rhd q_6) + \frac{1}{1120} (q_2 \rhd q_5) - \frac{1}{448} (q_3 \rhd q_4)\Big)\,.&\\\\
\Omega_3 &=  \Big(\frac{1}{360} (q_1 \rhd (q_1 \rhd q_3)) - \frac{1}{120} (q_2 \rhd (q_1 \rhd q_2)) \Big) + \Big(\frac{1}{1680} (q_1 \rhd (q_1 \rhd q_5)) - \frac{1}{1120} (q_1 \rhd (q_2 \rhd q_4)) +  &\\
&\;\;\;\;\;\;\frac{1}{1680} (q_2 \rhd (q_2 \rhd q_3)) + \frac{1}{2016} (q_3 \rhd (q_1 \rhd q_3)) -  \frac{1}{210} (q_4 \rhd (q_1 \rhd q_2)) \Big)\,.&\\\\
\Omega_4 &= \frac{1}{720} (q_1 \rhd (q_1 \rhd (q_1 \rhd q_2))) + \Big(\frac{1}{6720} (q_1 \rhd (q_1 \rhd (q_1 \rhd q_4))) - \frac{1}{3780} (q_1 \rhd (q_1 \rhd (q_2 \rhd q_3))) + \frac{1}{1344} &\\
&\;\;\;\;\;\;(q_1 \rhd (q_3 \rhd (q_1 \rhd q_2))) + \frac{11}{30240} (q_2 \rhd (q_1 \rhd (q_1 \rhd q_3))) -  \frac{1}{1680} (q_2 \rhd  (q_2 \rhd (q_1 \rhd q_2))) \Big)\,.&
\end{align*}
\begin{align*}
\Omega_5 &= \frac{-1}{15120} (q_1 \rhd (q_1 \rhd (q_1 \rhd (q_1, q_3)))) - \frac{1}{15120} (q_1 \rhd (q_1 \rhd (q_2 \rhd (q_1 \rhd q_2)))) + \frac{1}{3780}&\\
&\;\;\;\;\;\;(q_2 \rhd (q_1 \rhd (q_1 \rhd (q_1 \rhd q_2)))) \,.&\\\\
\Omega_6 &= \frac{-1}{30240} (q_1 \rhd (q_1 \rhd (q_1 \rhd (q_1 \rhd (q_1 \rhd q_2))))).
\end{align*}\\\\

\paragraph{\textbf{Acknowledgments}}{I would like to thank D. Manchon for his continuous support to me throughout the writing of this paper. I also thank F. Patras and K. Ebrahimi-Fard for valuable discussions and comments.}

\newpage

%%%%%%%%%%%%%%%%%%%%%%%%%%%%%%%%%%%%%%%%%%%%%%%%%%%%%%%%%%%%%%%%%%%
%%%%%%%%%%%%%%%%%%%%%%%%%%%%%%%%%%%%%%%%%%%%%%%%%%%%%%%%%%%%%%%%%%%

\end{document}